\def\input@path{{./}{../}}
    \newtheorem{puzzle}[theorem]{Puzzle}
\title{The Diameter of (Threshold) Geometric Inhomogeneous Random Graphs} 
\author{Zylan Benjert}{TU Delft, Netherlands}{z.benjert@student.tudelft.nl}{}{}
\author{Kostas Lakis}{ETH Zurich, Switzerland}{konstantinos.lakis@inf.ethz.ch}{}{}
\author{Johannes Lengler}{ETH Zurich, Switzerland}{johannes.lengler@inf.ethz.ch}{}{}
\author{Raghu Raman Ravi}{ETH Zurich, Switzerland}{raghu.ravi@inf.ethz.ch}{}{}
\authorrunning{Z. Benjert, K. Lakis, J. Lengler and R. R. Ravi} 
\keywords{GIRG, Diameter, Distributed Algorithms, Complex Networks} 
\newcommand{\density}{\ensuremath{\lambda}}
\newcommand{\groundspace}{\ensuremath{\mathcal{V}}}
\newcommand{\weightgeometry}{\ensuremath{\mathcal{V}_w}}
\newcommand{\Dzero}{\ensuremath{D_{0}}}
\newcommand{\Di}[1]{\ensuremath{D_{#1}}}
\newcommand{\tessLevel}[1]{\ensuremath{\mathcal{T}_{#1}}}
\newcommand{\tessPower}[1]{\ensuremath{e_{#1}}}
\newcommand{\weightof}[1]{\ensuremath{w_{#1}}}
\newcommand{\geomdist}[2]{\ensuremath{|#1 - #2|}}
\newcommand{\volbetween}[2]{\geomdist{#1}{#2}^d}
\newcommand{\parentbox}[1]{\pi\!\left(#1\right)}
\newcommand{\boxof}[1]{\ensuremath{B_{#1}}}
\newcommand{\Lof}[2]{\ensuremath{L(#1, #2)}}
\newcommand{\Lprimeof}[2]{\ensuremath{L'(#1, #2)}}
\newcommand{\Wof}[2]{\ensuremath{W(#1, #2)}}
\newcommand{\Sof}[2]{\ensuremath{S(#1, #2)}}
\newcommand{\towers}{\ensuremath{\mathbb{T}}}
\newcommand{\tower}[1]{\ensuremath{T_{#1}}}
\newcommand{\fort}{\ensuremath{F}}
\newcommand{\twmax}{\ensuremath{W}}
\newcommand{\comp}{\ensuremath{\mathcal{C}}}
\newcommand{\G}{\ensuremath{\mathcal{B}}}
\newcommand{\Gplus}{\ensuremath{\G^{+}}}
\newcommand{\visibleboundary}[2]{\ensuremath{\partial_{vis(#1)}(#2)}}
\newcommand{\generatingset}{\ensuremath{\Gamma_{\G}}}
\newcommand{\cycleofedge}[1]{\ensuremath{\gamma(#1)}}
\newcommand{\boxshadow}[1]{\ensuremath{\sigma(#1)}}
\newcommand{\graphdist}[3]{\ensuremath{d_{#1}(#2, #3)}}
\newcommand{\connprob}{\ensuremath{p}}
\newcommand{\eps}{\varepsilon}
\newcommand{\N}{\ensuremath{\mathbb{N}}}
\newcommand{\Z}{\ensuremath{\mathbb{Z}}}
\DeclarePairedDelimiter{\Paren}{(}{)}
\DeclarePairedDelimiter{\Bracket}{[}{]}
\DeclarePairedDelimiter{\Brace}{\{}{\}}
\NewDocumentCommand{\Oh}{som}{%
  \ensuremath{\mathcal{O}\IfBooleanTF{#1}{%
    \Paren*{#3}
  }{%
    \IfNoValueTF{#2}{%
      \Paren{#3}
    }{%
      \Paren[#2]{#3}
    }%
  }%
}}
\NewDocumentCommand{\Om}{som}{%
\ensuremath{\Omega\IfBooleanTF{#1}{%
    \Paren*{#3}%
  }{%
    \IfNoValueTF{#2}{%
      \Paren{#3}%
    }{%
      \Paren[#2]{#3}%
    }%
  }%
}}
\NewDocumentCommand{\Th}{som}{%
\ensuremath{\Theta\IfBooleanTF{#1}{%
    \Paren*{#3}%
  }{%
    \IfNoValueTF{#2}{%
      \Paren{#3}%
    }{%
      \Paren[#2]{#3}%
    }%
  }%
}}
\NewDocumentCommand{\oh}{som}{%
\ensuremath{o\IfBooleanTF{#1}{%
    \Paren*{#3}%
  }{%
    \IfNoValueTF{#2}{%
      \Paren{#3}%
    }{%
      \Paren[#2]{#3}%
    }%
  }%
}}
\NewDocumentCommand{\om}{som}{%
\ensuremath{\omega\IfBooleanTF{#1}{%
    \Paren*{#3}%
  }{%
    \IfNoValueTF{#2}{%
      \Paren{#3}%
    }{%
      \Paren[#2]{#3}%
    }%
  }%
}}
\NewDocumentCommand{\Prob}{som}{%
\ensuremath{\mathbb{P}
  \IfBooleanTF{#1}{%
    \Bracket*{#3}%
  }{%
    \IfNoValueTF{#2}{%
      \Bracket{#3}%
    }{%
      \Bracket[#2]{#3}%
    }%
  }%
}}
\NewDocumentCommand{\E}{som}{%
\ensuremath{\mathbb{E}
  \IfBooleanTF{#1}{%
    \Bracket*{#3}%
  }{%
    \IfNoValueTF{#2}{%
      \Bracket{#3}%
    }{%
      \Bracket[#2]{#3}%
    }%
  }%
}}
\NewDocumentCommand{\Max}{s o m m}{%
\ensuremath{\max
  \IfBooleanTF{#1}{%
    \Brace*{#3,\, #4}%
  }{%
    \IfNoValueTF{#2}{%
      \Brace{#3,\, #4}%
    }{%
      \Brace[#2]{#3,\, #4}
    }%
  }%
}}
\NewDocumentCommand{\Min}{s o m m}{%
\ensuremath{\min
  \IfBooleanTF{#1}{%
    \Brace*{#3,\, #4}%
  }{%
    \IfNoValueTF{#2}{%
      \Brace{#3,\, #4}%
    }{%
      \Brace[#2]{#3,\, #4}
    }%
  }%
}}
\NewDocumentCommand{\Exp}{som}{%
\ensuremath{\exp
  \IfBooleanTF{#1}{%
    \Paren*{#3}%
  }{%
    \IfNoValueTF{#2}{%
      \Paren{#3}%
    }{%
      \Paren[#2]{#3}%
    }%
  }%
}}
\newcommand{\calV}{\ensuremath{\mathcal{V}}}
\newcommand{\kl}[1]{\textcolor{magenta}{}}
\newcommand{\jl}[1]{\textcolor{cyan}{}}
\newcommand{\rr}[1]{\textcolor{orange}{}}
\begin{document}

\maketitle

\begin{abstract}
    We prove that the diameter of threshold (zero temperature) Geometric Inhomogeneous Random Graphs (GIRG) is $\Theta(\log n)$. This has strong implications for the runtime of many distributed protocols on those graphs, which often have runtimes bounded as a function of the diameter. 
    
    The GIRG model exhibits many properties empirically found in real-world networks, and the runtime of various practical algorithms has empirically been found to scale in the same way for GIRG and for real-world networks, in particular related to computing distances, diameter, clustering, cliques and chromatic numbers. Thus the GIRG model is a promising candidate for deriving insight about the performance of algorithms in real-world instances.

    The diameter was previously only known in the one-dimensional case, and the proof relied very heavily on dimension one. Our proof employs a similar Peierls-type argument alongside a novel renormalization scheme. Moreover, instead of using topological arguments (which become complicated in high dimensions) in establishing the connectivity of certain boundaries, we employ some comparatively recent and clearer graph-theoretic machinery. The lower bound is proven via a simple ad-hoc construction.
\end{abstract}

\newpage

\setcounter{page}{1}

\section{Introduction.}
The diameter of a graph is the maximum graph distance over all pairs of vertices. 
In case the graph is disconnected, we only consider pairs in the same connected component.
It has far-reaching implications on the performance of many distributed algorithms, where runtime bounds often depend on the diameter of the graph explicitly~\cite{peleg2000distributed}. The literature is too vast to do it justice here, so we just mention two examples, leader election~\cite{ghaffari2016leader} and minimum spanning tree~\cite{peleg2000distributed}, where the runtimes of state-of-the-art algorithms depend on the diameter. Thus it is important to understand the diameter of real-world networks and their models.

Many real-world networks commonly exhibit the following properties: power-law degree distributions~\cite{faloutsos1999power}, small-world distances (at most logarithmic) and large clustering coefficient\footnote{Consider choosing a random vertex and then two random neighbors of it. The probability that the latter vertices share an edge is the clustering coefficient.}~\cite{watts1998collective}, low-dimensionality~\cite{friedrich2023real} (assuming some underlying space for the vertices), hierarchical structure~\cite{clauset2008hierarchical}, navigability\footnote{As observed in the famous Milgram experiment~\cite{milgram1967small}, which became popularly known as ``six degrees of separation''.}~\cite{boguna2009navigability,bringmann2017greedy}, self-similarity~\cite{song2005self} and so on.
Motivated by this, numerous models for such networks have been introduced in the literature. One example, which we study in this work, is called \emph{Geometric Inhomogeneous Random Graphs (GIRG)}. This model generalizes Hyperbolic Random Graphs (HRG)~\cite{bringmann2019geometric}. Roughly speaking, vertices are randomly placed in Euclidean space (usually in a torus) and sample weights from a power law distribution, see~\Cref{def:simple-girg} below. Then, edge probabilities increase with the product of the weights and decrease with the distance. We focus on the \emph{threshold} case of this model (also called \emph{zero temperature GIRG} or \emph{T-GIRG}) in this work.

Our main motivation for studying the GIRG model stems from recent empirical works which show that the performance of various algorithms on GIRG closely matches that on real-world networks~\cite{blasius2024external,cerf2024balanced}. This includes in particular bidirectional breadth-first search for computing shortest paths, the iFUB algorithm~\cite{crescenzi2013computing} for the diameter, the dominance rule for vertex cover, the Louvain algorithm for clustering~\cite{blondel2008fast}, and the time required to enumerate all maximal cliques, and reduction rules for computing the chromatic number. 
While there are many other models for social and other complex real-world networks~\cite{leskovec2005graphs,leskovec2010kronecker,krioukov2010hyperbolic,deijfen2013scale,holland1981exponential,gracar2022recurrence}, we believe that these transfer results make the GIRG model a particularly promising candidate for gaining theoretical insights into the performance of algorithms, as they may sometimes translate into practical performances on real-world networks.

Moreover, there is compelling empirical evidence for the quality of fit between GIRG and certain real-world networks at least when properties like (degree) heterogeneity, clustering, typical distances and so on are the object of focus~\cite{blasius2024external, dayan2024expressivity, friedrich2023real}. For the special case of the HRG model, earlier works support the notion that real-world networks (for example a certain subset of the Internet) are amenable to embeddings in hyperbolic space~\cite{krioukov2010hyperbolic, garcia2019mercator}, and there is a mapping from such a space to 1-dimensional GIRG.\footnote{Roughly speaking, the angle of a vertex in the hyperbolic disk corresponds to its $x$ coordinate, and the distance from the center corresponds (inversely) to its weight.} 


Let us now formally define the model. We first describe a power-law distribution, to be used for the weights of the vertices, which are in expectation in the order of their degrees.\footnote{There are more general versions of this definition, most notably incorporating so-called \emph{slowly-varying} functions as a scaling factor~\cite{voitalov2019scale}, which allows for better statistical fit to some real-world networks. Our proofs can be extended to such a scenario using standard tools (e.g.\ Potter bounds~\cite{bingham1989regular}), but we keep the current definition for simplicity of notation.} 
\begin{definition}\label{def:power-law}
    Let $\tau>1$. A continuous random variable $X\ge 1$ is said to follow a \emph{power-law with exponent} $\tau$ if it satisfies $\Prob{X \ge x} = \Th{x^{1-\tau}}$.
\end{definition}

We now define the precise graph model we will work with. 
For simplicity, throughout we use $\geomdist{u}{v}$ to denote the max norm distance between the positions of vertices $u$ and $v$.
Any other norm would do, as it would only change the distances by a constant factor at most, and our proofs are robust enough to handle such deviations. Also, throughout the paper we will assume that the dimension $d$, the density $\lambda$ and the power-law exponent $\tau$ are constant, while $n\to\infty$. Deviating slightly from the usual computer science convention, the expected number of vertices is $\lambda n$ instead of $n$, but this will not change any asymptotic results.
\begin{definition}\label{def:simple-girg}
Let $\tau>2, \density > 0$, $d,n\in\N$, and let $\mathcal{D}$ be a power-law distribution on $[1,\infty)$ with exponent $\tau$. A \emph{Threshold Geometric Inhomogeneous Random Graph (T-GIRG)} is sampled as follows:
    \begin{enumerate}
        \item The vertex set $\calV$ is given by a Poisson Point Process of intensity $\density$ on the d-dimensional torus $[0,n^{1/d}]^d$ of volume $n$.\footnote{Equivalently, we can sample the number of vertices from a Poisson distribution with expectation $\density n$ and draw their positions uniformly at random. This has mathematically slightly nicer properties than picking \emph{exactly} $\density n$ vertices, in particular, the number and types of vertices in any two disjoint regions of the space are independent. However, the differences are negligible, see~\cite[Claims~3.2,3.3]{komjathy2020explosion}.}
        \item Every vertex $u\in\mathcal{V}$ draws i.i.d.\ a \emph{weight} $\weightof{u} \sim \mathcal{D}$.

        \item For every two distinct vertices $u,v \in \mathcal{V}$, add an edge between $u$ and $v$ in $\mathcal{E}$ if and only if 
        \[
        \weightof{u} \weightof{v} \ge \volbetween{u}{v}.
        \]
    \end{enumerate}
\end{definition}
\begin{remark}
    This is only a special case of the GIRG model as defined in~\cite{bringmann2019geometric}. In the general case, the connection probability between two vertices scales as $(\weightof{u}\weightof{v}/\volbetween{u}{v})^{\alpha}$, where  the constant parameter $\alpha>1$ is the \emph{inverse temperature}. We use $\alpha = \infty$.
\end{remark}

\textbf{Previous works.} The diameter of GIRG has already been shown to be polylogarithmic, but with an unidentified exponent~\cite{bringmann2018averagedistancegeneralclass}. 
The difference between this bound and ours is still substantial for many modern distributed settings. 
Very closely related to this work is that of M{\"u}ller and Staps~\cite{muller2019diameter} which showed that the diameter of HRG is logarithmic.
As already noted, HRG can be thought of as GIRG with $d = 1$, and in fact some part of our upper bound proof reuses elements from the HRG case. However, the proof in~\cite{muller2019diameter}, in particular the use of blocking structures, relied heavily on dimension one. We also note that the resulting graphs for $d=1$ are \emph{structurally different}: for $d\ge 2$, the subgraph induced by vertices of degree at most $D$ contains a giant (linear-sized) component if $D$ is a sufficiently large constant. For $d=1$, the same induced subgraph is scattered and subcritical for every constant $D$.

\textbf{Organization.} The following sections (upper bound in~\Cref{sec:high_density,sec:any_density} and lower bound in~\Cref{sec:LB}) contain the derivation of our results, with proof sketches and intuition provided together with any claim. 
Some formal proofs are moved to the appendix due to space constraints. Altogether, we prove the following theorem. Traditionally, the most important regime is the one for $\tau\in(2,3)$, for which we obtain a diameter of $\Theta(\log n)$.
\begin{theoremEnd}[normal, text link=]{theorem}
    The following statements hold with high probability.\footnote{We say that an event holds \emph{with high probability (whp)} if it holds with probability $1 - o(1)$.}
    \begin{itemize}
        \item If $\tau = 3$ and $\density$ is a sufficiently large constant, then the diameter of T-GIRG is $\Oh{\log n}$.
        \item If $\tau < 3$, then the diameter of T-GIRG is $\Oh{\log n}$.
        \item If $\tau > 2$, then the diameter of T-GIRG is $\Om{\log n}$.
    \end{itemize}
\end{theoremEnd}
\begin{proofEnd}
    Upper bounds follow by~\Cref{lem:pathExistenceHighDensity,lem:pathcountinglemma} and~\Cref{cor:UB1,cor:UB2}. Lower bound follows by~\Cref{theorem:LB}.
\end{proofEnd}
We also provide in Appendix~\ref{sec:puzzle} some initial steps in extending the proof of the upper bound for a slight generalization of the model, in which we only keep each edge with probability $p$.
This happens in the general formulation of the GIRG model. 

\section{Upper bound: large enough \texorpdfstring{$\density$}{λ} and \texorpdfstring{$\tau \le 3$}{τ <= 3}.}\label{sec:high_density}
In this section we present the proof in a simplified setting, namely when the density $\density$ is large enough. This makes the proof considerably less technical and easier to follow. In the subsequent section we will then explain how this assumption can be removed (but only if $\tau < 3$). For the following, $\weightgeometry{} := [0,n^{1/d}]^d \times [1,\infty)$ is the geometric ground space augmented with the weight dimension. 

We follow some ideas from~\cite{muller2019diameter}. In particular, we tessellate $\weightgeometry$ in tree-structured boxes as follows (see~\Cref{fig:tessellation}). First, we fix a small constant $\Dzero$ such that $(n^{1/d})/\Dzero = 2^{\tessPower{0}}$, with $\tessPower{0}$ being an integer. The lowest level of boxes is then defined as:
\begin{displaymath}
\tessLevel{0} \;=\; 
\left\{
    \left(\prod_{k=1}^d 
  \left[
    (j_k - 1)\Dzero,\; j_k \Dzero
  \right)\right) \times \left[1, 2^{d/2}\right)
  \;\Big|\;
  1 \le j_k \le 2^{\tessPower{0}}
\right\}.
\end{displaymath}
In words, we tessellate\footnote{To have a proper partition of the ground space, we actually replace $\left[
    (j_k - 1)\Dzero,\; j_k \Dzero
  \right)$ by $\left[
    (j_k - 1)\Dzero,\; j_k \Dzero
  \right]$ whenever $j_k$ is the maximum permitted value. We only write this in this footnote for simplicity of the formulas.} the geometric space in the natural way by boxes of side length $\Dzero$ and we take for each the product with the weight range from $1$ to $2^{d/2}$. The choice of $2^{d/2}$ as the upper limit will be made clear soon.

We now similarly define the higher levels of the tessellation. The idea is that each box will have $2^d$ boxes of the next lowest level ``directly below'' it. That is, we define for $i < \tessPower{0}$:
\begin{displaymath}
\tessLevel{i} \;=\; 
\left\{
    \left(\prod_{k=1}^d 
  \left[
    2^i(j_k - 1)\Dzero,\; 2^i j_k \Dzero
  \right)\right) \times \left[(2^{d/2})^{i}, (2^{d/2})^{i + 1}\right)
  \;\Big|\;
  1 \le j_k \le 2^{\tessPower{0} - i}
\right\}.
\end{displaymath}

The highest level of the hierarchy consists of a single box covering the entire geometric ground space and the leftover weight range. That is, $\tessLevel{\tessPower{0}} = \groundspace \times \left[(2^{d/2})^{\tessPower{0}}, \infty\right)$.

\begin{figure}
    \centering

\tdplotsetmaincoords{75}{120}

\begin{tikzpicture}[tdplot_main_coords,
                    line join=round,line cap=round,scale=0.8]

\newcommand{\drawcube}[5]{
  \filldraw[draw=black,line width=.4pt,fill=#5!30,opacity=.35]
    (#1,#2+#4,#3) -- ++(#4,0,0) -- ++(0,0,#4) -- ++(-#4,0,0) -- cycle;
  \filldraw[draw=black,line width=.4pt,fill=#5!45,opacity=.45]
    (#1,#2,#3) -- ++(0,#4,0) -- ++(0,0,#4) -- ++(0,-#4,0) -- cycle;
  \filldraw[draw=black,line width=.4pt,fill=#5!55,opacity=.5]
    (#1,#2,#3) -- ++(#4,0,0) -- ++(0,#4,0) -- ++(-#4,0,0) -- cycle;
  \filldraw[draw=black,line width=.4pt,fill=#5!60,opacity=.5]
    (#1+#4,#2,#3) -- ++(0,#4,0) -- ++(0,0,#4) -- ++(0,-#4,0) -- cycle;
  \filldraw[draw=black,line width=.4pt,fill=#5!70,opacity=.55]
    (#1,#2,#3) -- ++(#4,0,0) -- ++(0,0,#4) -- ++(-#4,0,0) -- cycle;
  \filldraw[draw=black,line width=.4pt,fill=#5!80,opacity=.6]
    (#1,#2,#3+#4) -- ++(#4,0,0) -- ++(0,#4,0) -- ++(-#4,0,0) -- cycle;
}

\definecolor{lvl0}{RGB}{ 70,130,180}
\definecolor{lvl1}{RGB}{ 34,139, 34}
\definecolor{lvl2}{RGB}{178, 34, 34}
\foreach \x in {0,...,3}{
  \foreach \y in {0,...,3}{
    \drawcube{\x}{\y}{0}{1}{lvl0}
  }
}

\draw[decorate,
      decoration={brace,mirror,amplitude=6pt},
      thick]          
      (-2,-3.5,-2) --  
      ++(1, 1.5, 0.3)       
     node[midway,below=8pt] {$\Dzero$};

\foreach \x in {0,2}{
  \foreach \y in {0,2}{
    \drawcube{\x}{\y}{1}{2}{lvl1}
  }
}

\drawcube{0}{0}{3}{4}{lvl2}


\coordinate (P1) at (2,-1,0);   
\coordinate (P2) at (1.6,-0.7,1.5);       
\coordinate (P3) at (2,2,4.5);       
\coordinate (P4) at (2.9,3.2,2);         
\coordinate (P5) at (2.4,2.8,0);   

\draw[dashed, opacity=0.7] (P1) -- (P2) -- (P3) -- (P4) -- (P5);

\foreach \pt/\lbl in {P1/$u_1$,P2/$u_2$,P3/$u_3$,P4/$u_4$,P5/$u_5$}{
  \filldraw[black, opacity=0.85] (\pt) circle[radius=2pt]
         node[anchor=south east] {\lbl};
}


\end{tikzpicture}

\caption{Visualization of the tessellation for $d = 2$, with $3$ levels. The side length of lowest-level boxes is $\Dzero$. The blue boxes comprise $\tessLevel{0}$ (weight range $[1, 2)$), the green ones are $\tessLevel{1}$ (weight range $[2, 4)$) and the red box is $\tessLevel{2}$ (weight range $[4, \infty)$). Also depicted is a vertex path arising from a fully active canonical box path (as in~\Cref{def:canonboxpath}).} \label{fig:tessellation}
\end{figure}

Now, notice that any two adjacent boxes $\boxof{1}, \boxof{2}$ in this tessellation have the following property. For any vertices $x, y$ such that $x \in \boxof{1}$ and $y \in \boxof{2}$, we have $\weightof{x} \weightof{y} \ge \volbetween{x}{y}$ (for $\Dzero$ small enough).

This leads to the following observation. Consider any pair of vertices $u$ and $v$. From the box $\boxof{u}$ of $u$ there is a ``canonical path'' of boxes to the box $\boxof{v}$ of $v$, which is comprised of the two ``upward'' paths from $\boxof{u}$ and $\boxof{v}$ to their lowest common ancestor. If it were the case that all the boxes in this path contained at least one vertex, we would obtain a vertex-path (see~\Cref{fig:tessellation}) from $u$ to $v$ of length $\Oh*{\log{n}}$. To make this and the following more precise, we give the following definitions.

\begin{definition}
Consider the boxes of the tessellation of $\weightgeometry{}$ as a vertex set. On this vertex set we define $\G$ by adding an edge between boxes $B_1, B_2$ if their closures intersect in a $d$-dimensional set.\footnote{Mind that $\weightgeometry{}$ has dimension $d+1$.} We also define $\Gplus{}$ on the same vertex set by adding edges when the closures of said boxes have a non-empty intersection.
\end{definition}

Of course, the previously mentioned canonical path may fail to contain a vertex in each box. Nevertheless, we can utilize this idea to construct short paths between any two vertices $u$ and $v$, given that they are connected. To this end, let a box be called \emph{active} if it contains at least one vertex in the realization of the GIRG graph. For two boxes $\boxof{1}, \boxof{2}$, let $\Lof{\boxof{1}}{\boxof{2}}$ be the canonical box path (not necessarily active) connecting them. More formally:

\begin{definition}[Canonical box path]\label{def:canonboxpath}
    Let $\boxof{1}, \boxof{2}$ be two distinct boxes of the tessellation. For any box $B$ other than the top-most box $B_{top}$, let $\parentbox{B}$ be the unique box of minimum volume under the constraint that it is a strict superset of $B$ under projection along the weight dimension. Notice that $\{B, \parentbox{B}\} \in E(\G)$. Now, the set of edges identified in this way constitutes a spanning tree $T_{\G}$ of $\G$, rooted at $B_{top}$. We define the canonical box path $\Lof{B_1, B_2}$ from $B_1$ to $B_2$ as the unique path between them in $T_{\G}$. We may sometimes abuse notation and use $\Lof{u}{v}$ to mean $\Lof{\boxof{u}}{\boxof{v}}$.
\end{definition}

It turns out that we can bound the length of a shortest path between $u$ and $v$ by the cardinality of the set $\Wof{\boxof{u}}{\boxof{v}}$, defined below. Intuitively this set is constructed by taking $\Lof{\boxof{u}}{\boxof{v}}$ and ``expanding'' along inactive boxes. 

\begin{definition}[Inactive region of canonical box path]\label{def:inactiveregion}
    Let $\boxof{1}, \boxof{2}$ be two distinct boxes of the tessellation. Let $R$ be the subset of inactive boxes. Let $C_1^R, C_2^R, \ldots, C^R_c$ be the connected components of $\Gplus[R]$. We define $\Wof{\boxof{1}}{\boxof{2}}$ as 
    \begin{displaymath}
    \Wof{\boxof{1}}{\boxof{2}} := \Lprimeof{\boxof{1}}{\boxof{2}} \cup \left\{ \bigcup_{C_i^R \cap \Lprimeof{\boxof{1}}{\boxof{2}} \neq \emptyset}{C_i^R} \right\},
    \end{displaymath}
    where $\Lprimeof{\boxof{1}}{\boxof{2}} = \Lof{\boxof{1}}{\boxof{2}} \cup \{B \mid \text{$B$ is $\Gplus{}$-adjacent to some $B'$ in $\Lof{\boxof{1}}{\boxof{2}}$}\}$.
    We may sometimes abuse notation and use $\Wof{u}{v}$ to mean $\Wof{\boxof{u}}{\boxof{v}}$. Notice that $\Wof{\boxof{1}}{\boxof{2}}$ is a $\Gplus{}$-connected set of boxes. Additionally, any box $B \notin \Wof{\boxof{1}}{\boxof{2}}$ but $\Gplus{}$-adjacent to some box in $\Wof{\boxof{1}}{\boxof{2}}$ must be active. We use $\Sof{\boxof{1}}{\boxof{2}}$ to refer to the set of such boxes.
\end{definition}

To show a logarithmic upper bound on the diameter, it now suffices to show two statements. First, we will essentially show that if $u$ and $v$ are connected in the GIRG graph $G$, then $G^3$ contains a path from $u$ to $v$ where all vertices are in boxes of $\Wof{u}{v}\cup S(u,v)$.\footnote{$G^3$ is a graph where we add an edge between two vertices if and only if their distance is at most $3$ in $G$.} As a consequence, the length of a shortest path between any pair of vertices $u$ and $v$ is deterministically $\Oh*{|\Wof{u}{v}\cup S(u,v)|} = \Oh*{|\Wof{u}{v}|}$. Second, with high probability it holds that $|\Wof{u}{v}| = \Oh*{\log{n}}$ for all pairs $u, v$. Before proceeding to show these statements, we must lay some groundwork regarding $\G$ and $\Gplus$. This will be useful in establishing the first statement.

\subsection*{Boundary connectivity.}
Now we prove some useful structural properties about the graphs $\G$ and $\Gplus$. In particular, we will establish some preconditions of theorems proven in~\cite{timar2011bondaryconnectivitygraphtheory}. We will define the ``visible boundary'' of a set $C$ of boxes with respect to a given box $B$ as the set of $\Gplus{}$-neighbors of $C$ that are $\G{}$-reachable from $B$ without crossing $C$, see below. The goal is then to show that the visible boundaries of $\Gplus$-connected sets are $\G$-connected. Applying the result for $C:= \Wof{u}{v}$ will be useful later in the proof, where we construct shortcuts for subpaths that venture outside of $\Wof{u}{v} \cup \Sof{u}{v}$.

To illustrate the idea further and provide some intuition, let us consider for now such boundaries in the simplified case of $\Z^2$, where vertices are points of the infinite integer grid and nearest neighbors share an edge (in the counterpart of $\G$), as in~\Cref{fig:2DBoundaries}. For the analogue of $\Gplus$, which we will call $\Z^{2*}$, we also add the edges between vertices whose $x$ and $y$ coordinates both differ by 1. Each vertex of this infinite graph should intuitively be thought of as a box in our tessellation, even though our tessellation does not possess the edge set of $\Z^2$ or $\Z^{2*}$. Assume now that we are presented with a blue/red coloring of the vertices of $\Z^2$, where blue is generally meant to signify an active box and red generally signifies an inactive box. Given such a coloring with the origin being red, we can define the ``red component'' $C_{red}$ (comparable to $\Wof{u}{v}$ for our purposes) of the origin as its connected component in $\Z^{2*}$ induced by red vertices. This may possibly yield a red region without ``holes'', surrounded by a connected (in $\Z^2$) blue boundary (as in~\Cref{fig:2DBoundaries}, left). However, this is not the only case. For example, $C_{red}$ could resemble a donut, in which case the blue boundary consists of two connected circles. Now, the points inside the donut hole ``see'' a connected part of the boundary and so do the points outside the donut. This is a crucial property which we will use in the following way. Thinking of the red region as $\Wof{u}{v}$ and of the blue boundary as $\Sof{u}{v}$, consider the graph path connecting $u$ to $v$. If the path stays within the union of the red region and the blue boundary, it is already short enough and there is nothing we need to do. If, however, this walk ventures outside of $\Wof{u}{v} \cup \Sof{u}{v}$, we will show that there is a shortcut walking along the visible boundaries to obtain a new walk that avoids such excursions, except perhaps for the first and last vertices of each. More exotic cases can arise when the path successively jumps through a sequence of different holes, but these can also be tamed as we will see.


\begin{figure}[ht]
\centering
\begin{tikzpicture}[scale=0.4, line join=round, line cap=round]

\tikzset{
  gridpt/.style={fill=black!30, draw=none},
  comppt/.style={fill=red, draw=red},        
  boundpt/.style={fill=blue, draw=blue},     
  compcell/.style={fill=red, fill opacity=0.3, draw=none},
  boundcell/.style={fill=blue, fill opacity=0.3, draw=none},
  labelstyle/.style={font=\small},
  pathseg/.style={draw=orange!85!black, very thick},
  shortcut/.style={draw=purple, dashed, ultra thick, dash pattern=on 6pt off 4pt},
  vertex/.style={fill=black, draw=white, line width=0.3pt, circle, minimum size=6pt, inner sep=0pt}
}

\newcommand{\cell}[3]{
  \path[#1] ($(#2-0.5,#3-0.5)$) rectangle ++(1,1);
}

\begin{scope}[shift={(-7,0)}]
  \foreach \x in {-5,...,5}
    \foreach \y in {-5,...,5}
      \fill[gridpt] (\x,\y) circle (0.6pt);

  \foreach \x in {-2,...,2}
    \foreach \y in {-2,...,2}{
      \cell{compcell}{\x}{\y}
      \fill[comppt] (\x,\y) circle (1.8pt);
    }

  \foreach \x in {-3,...,3}{
    \cell{boundcell}{\x}{3} \fill[boundpt] (\x,3) circle (1.8pt);
    \cell{boundcell}{\x}{-3} \fill[boundpt] (\x,-3) circle (1.8pt);
  }
  \foreach \y in {-2,...,2}{
    \cell{boundcell}{3}{\y} \fill[boundpt] (3,\y) circle (1.8pt);
    \cell{boundcell}{-3}{\y} \fill[boundpt] (-3,\y) circle (1.8pt);
  }

  \coordinate (Ls) at (-1.2,-1.15);
  \coordinate (Le) at ( 1.25, 1.1);
  \node[vertex] at (Ls) {};
  \node[labelstyle] at ($(Ls)+(-0.2,-0.5)$) {$v_{\mathrm{start}}$};
  \node[vertex] at (Le) {};
  \node[labelstyle] at ($(Le)+( 0.45, -0.4)$) {$v_{\mathrm{end}}$};

  \coordinate (orig) at (0, 0);
  \node[vertex] at (orig) {};
  \node[labelstyle] at ($(orig)+(0.2,0.5)$) {$\mathcal{O}$};

  \coordinate (LoutA) at (-0.4,  3.20); 
  \coordinate (LoutB) at ( 1.0,  3.15); 

  \draw[pathseg, rounded corners=12pt]
    (Ls)
      .. controls (-1.4,0.6) and (-0.9,2.2) .. (LoutA)
      .. controls (-0.2,4.6) and ( 1.2,4.4) .. (LoutB)
      .. controls ( 1.3,2.5) and ( 1.6,1.8) .. (Le);

  \draw[shortcut, rounded corners=8pt]
    ($(LoutA)+(0,0.05)$)
      .. controls (-0.2,3.05) and (0.5,3.05) .. ($(LoutB)+(0,0.05)$);

  \node[labelstyle] at (0,-5.5) {Simply connected \(C_{\text{red}}\)};
\end{scope}

\begin{scope}[shift={(7,0)}]
  \foreach \x in {-6,...,6}
    \foreach \y in {-6,...,6}
      \fill[gridpt] (\x,\y) circle (0.6pt);

  \foreach \x in {-4,...,4}{
    \cell{compcell}{\x}{4} \fill[comppt] (\x,4) circle (1.8pt);
    \cell{compcell}{\x}{-4} \fill[comppt] (\x,-4) circle (1.8pt);
  }
  \foreach \y in {-3,...,3}{
    \cell{compcell}{4}{\y} \fill[comppt] (4,\y) circle (1.8pt);
    \cell{compcell}{-4}{\y} \fill[comppt] (-4,\y) circle (1.8pt);
  }

  \foreach \x in {-5,...,5}{
    \cell{boundcell}{\x}{5} \fill[boundpt] (\x,5) circle (1.8pt);
    \cell{boundcell}{\x}{-5} \fill[boundpt] (\x,-5) circle (1.8pt);
  }
  \foreach \y in {-4,...,4}{
    \cell{boundcell}{5}{\y} \fill[boundpt] (5,\y) circle (1.8pt);
    \cell{boundcell}{-5}{\y} \fill[boundpt] (-5,\y) circle (1.8pt);
  }

  \foreach \x in {-3,...,3}{
    \cell{boundcell}{\x}{3} \fill[boundpt] (\x,3) circle (1.8pt);
    \cell{boundcell}{\x}{-3} \fill[boundpt] (\x,-3) circle (1.8pt);
  }
  \foreach \y in {-2,...,2}{
    \cell{boundcell}{3}{\y} \fill[boundpt] (3,\y) circle (1.8pt);
    \cell{boundcell}{-3}{\y} \fill[boundpt] (-3,\y) circle (1.8pt);
  }

  \coordinate (Rs) at ( 3.55, 0.9);
  \coordinate (Re) at (-3.55,-0.9);
  \node[vertex] at (Rs) {};
  \node[labelstyle] at ($(Rs)+( 0.6, 0.45)$) {$v_{\mathrm{start}}$};
  \node[vertex] at (Re) {};
  \node[labelstyle] at ($(Re)+(-0.4,-0.45)$) {$v_{\mathrm{end}}$};

  \coordinate (origR) at (0, 4);
  \node[vertex] at (origR) {};
  \node[labelstyle] at ($(origR)+(0.5,0.2)$) {$\mathcal{O}$};

  \coordinate (BoutA) at ( 0.8,-5.25);  
  \coordinate (BoutB) at (-1.0,-5.25);  

  \coordinate (Hin)  at ( 2.6, 0.25);   
  \coordinate (Hout) at ( 0.25, 1.0);   

  \draw[pathseg, rounded corners=12pt]
    (Rs)
      .. controls ( 3.2, 0.1) and ( 4.0,-2.6) .. ( 1.4,-4.1)
      .. controls ( 1.2,-4.6) and ( 1.0,-4.9) .. (BoutA)
      .. controls ( 0.6,-6.1) and (-0.6,-6.2) .. (BoutB)
      .. controls (-1.2,-4.8) and (-2.4,-3.2) .. (-1.8,-2.4)
      .. controls (-2.2,-1.0) and ( 0.6,-0.2) .. (Hin)
      .. controls ( 1.2, 0.3) and ( 0.6, 1.2) .. (Hout)
      .. controls (-0.8, 2.8) and (-2.4, 1.0) .. (Re);

  \draw[shortcut, rounded corners=8pt]
    ($(BoutA)+(0,0.15)$)
      .. controls ( 0.2,-5.05) and (-0.4,-5.05) .. ($(BoutB)+(0,0.15)$);

  \draw[shortcut, rounded corners=8pt]
    (-2.8, -0.05)
      .. controls ( -3, -1.9) and ( -3.2, -2) .. (-2.4, -3);

  \node[labelstyle] at (0,-6.9) {Donut-shaped \(C_{\text{red}}\)};
\end{scope}

\end{tikzpicture}
\caption{Two examples of a red connected component \(C_{\text{red}}\) and its blue boundary with paths that start and end in \(C_{\text{red}}\).
Left: one outward excursion; the shortcut stays inside the connected blue boundary.
Right: first an outward excursion near the bottom, then a separate excursion into the hole; each has a shortcut confined to the corresponding connected part of the blue boundary.}\label{fig:2DBoundaries}
\end{figure}

With the previous example guiding our intuition and keeping in mind that there are new subtleties to uncover, let us now define visible boundaries in accordance with~\cite{timar2011bondaryconnectivitygraphtheory}. As we have mentioned, we will use these boundaries to ``patch'' parts of our constructed path, using their connectivity, which we will prove later.
\begin{definition}
    For a $\Gplus$-connected subset of $C$ and a box $B$, we define the \emph{boundary of $C$ visible from $B$} as
    \begin{displaymath}
        \visibleboundary{B}{C} = \{B' \mid \text{$B'$ is $\Gplus$-adjacent to some $B'' \in C$ and $B'$ is connected to $B$ in $\G \setminus C$}\}
    \end{displaymath}
\end{definition}

Notice that we require $B'$ to be $\Gplus$-adjacent to $C$, but we only allow paths in $\G$ for the connection to $B$. This is required for the theorems in~\cite{timar2011bondaryconnectivitygraphtheory} to apply, but otherwise plays little role for our analysis.

Before introducing the theorem we will use, let us describe the preconditions we need to prove. The first one is with regards to the \emph{cycle space} of $\G$. The cycle space is simply the set of subgraphs of $G$ where each vertex has an even degree. Our goal is to find a suitable generating set $\generatingset{}$ for this space, with symmetric difference of edges being the underlying algebraic operation. For this first criterion, we regard a generating set as \emph{suitable} if all $C \in \generatingset{}$ are chordal in $\Gplus$. We say that a cycle is \emph{chordal} in $\Gplus$ if its vertices induce a clique in $\Gplus$. For reference, such a set for the pair $\Z^2, \Z^{2*}$ is the set of all 4-step cycles.

It is a standard fact that so-called ``fundamental cycles'' generate the cycle space of a graph~\cite[Theorem~1.9.5]{diestel2017}. To produce a set of fundamental cycles, pick a spanning forest $F$. Then, for any $e \in G \setminus F$, $F \cup \{e\}$ has a unique cycle. The set of cycles found like so is referred to as a set of fundamental cycles.

It therefore suffices to find a generating set for a set of fundamental cycles. To do so, consider the natural spanning tree $T_{\G}$ of $\G$, comprised of all edges of $\G$ connecting boxes of different levels (see~\Cref{fig:SpanningTreeFundCycle}). We will observe that fundamental cycles of this tree follow a very restricted structure which will guide us into defining $\generatingset{}$.
\begin{observation}\label{obs:fundamentalcycles}
    All fundamental cycles of $T_{\G}$ can be expressed as the union of $e, P_{B_1}$ and $P_{B_2}$, where $e = \{B_1, B_2\}$ is the added edge and $P_{B_i}$ is the unique path in $T_{\G}$ from $B_i$ to the lowest common ancestor of $B_1, B_2$, assuming $T_{\G}$ is rooted at the highest level box. Moreover, for each $B'_1$ and $B'_2$ in the same level of the tessellation and with $B'_{i}$ internal in $P_{i}$, we have $\{B'_1, B'_2\} \in E(\G)$.
\end{observation}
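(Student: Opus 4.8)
The plan is first to pin down exactly which pairs of boxes are adjacent in $\G$, and then to read both assertions off this combinatorial description. The crucial structural fact is that $E(T_{\G})$ is precisely the set of $\G$-edges joining boxes of \emph{different} levels, that these are exactly the parent edges $\{B,\parentbox B\}$, and hence that every non-tree edge of $\G$ joins two boxes of the \emph{same} level.

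To establish this, note that $\weightgeometry$ has dimension $d+1$ and that the weight range of a level-$i$ box is $[(2^{d/2})^i,(2^{d/2})^{i+1})$. Since $2^{d/2}>1$, the closures of the weight ranges of levels $i$ and $j$ intersect iff $|i-j|\le 1$; when $i=j$ they share a $1$-dimensional interval, and when $|i-j|=1$ they meet in a single point (dimension $0$). Combined with the fact that the geometric projections of the boxes form a dyadic subdivision of the torus, a short dimension count yields: (i) a level-$i$ box is $\G$-adjacent to a level-$(i+1)$ box iff the latter is its dyadic parent — this requires a full-dimensional overlap of the geometric projections, which only the parent provides; and (ii) two boxes of the same level are $\G$-adjacent iff their integer index vectors differ by exactly $1$ in exactly one coordinate, modulo the wrap-around on the torus (for $\Gplus$, read ``by at most $1$ in every coordinate''). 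Hence $E(T_{\G})$ equals the set of different-level $\G$-edges, and since every non-root box has a unique parent, $T_{\G}$ is a spanning tree rooted at $B_{top}$. The first assertion of the observation is then the standard description of fundamental cycles: a non-tree edge $e=\{B_1,B_2\}$ joins two boxes of a common level $\ell$, hence of equal depth in $T_{\G}$, so the fundamental cycle of $e$ is $e$ together with the unique $T_{\G}$-path from $B_1$ to $B_2$, which splits at $A:=\mathrm{lca}(B_1,B_2)$ into $P_{B_1}\cup P_{B_2}$ (cf.\ \cite[Theorem~1.9.5]{diestel2017}).

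For the ``moreover'' part, fix $e=\{B_1,B_2\}$ at level $\ell$; by (ii) there is a unique coordinate, say the first, in which the index vectors of $B_1$ and $B_2$ differ, and there they differ by exactly $1$. Let $a_j$ (resp.\ $b_j$) be the unique box of $P_{B_1}$ (resp.\ $P_{B_2}$) at level $\ell+j$, so $a_0=B_1$, $b_0=B_2$, and $a_r=b_r=A$ where $\ell+r$ is the level of $A$. I would prove by induction on $j$ that for every $j<r$ the index vectors of $a_j$ and $b_j$ agree in all coordinates but the first, where they differ by exactly $1$ (modulo the number of boxes in that coordinate at level $\ell+j$). Passing to parents replaces each index by its half rounded up, so agreement in the untouched coordinates is preserved automatically; in the first coordinate the elementary identity $\lceil (p+1)/2 \rceil - \lceil p/2 \rceil \in\{0,1\}$, which equals $0$ precisely when $p$ is odd, shows that the two indices either remain one apart or collapse to the same value — and by definition of $r$ the collapse happens exactly at $j=r$. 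Thus for every $j<r$ the boxes $a_j,b_j$ are face-adjacent at level $\ell+j$, hence $\G$-adjacent. Finally, since $P_{B_1}$ and $P_{B_2}$ each meet every level in the range $[\ell,\ell+r]$ in exactly one box and coincide only at level $\ell+r$, any $B'_1\in P_{B_1}$ and $B'_2\in P_{B_2}$ lying in a common level with $B'_1\neq B'_2$ are of the form $a_j,b_j$ with $j<r$; so $\{B'_1,B'_2\}\in E(\G)$, which is exactly the claim (and for $j=0$ this edge is just $e$).

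The one place where genuine care is needed is the adjacency characterization in the second paragraph: getting the dimension bookkeeping right for intersections of closures in the $(d+1)$-dimensional augmented ground space — this is precisely what the gluing of consecutive levels' weight ranges at single points is for — and handling the torus wrap-around at the coarsest levels, where a ``row'' may contain only two boxes (a benign special case that forces $r=1$). Everything after that is routine tree combinatorics together with the one-line ceiling identity.
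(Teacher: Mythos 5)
Your proof is correct but takes a genuinely different route from the paper's. For the ``moreover'' clause, the paper argues by contraposition at the level of closures: if two same-level boxes $B'_1,B'_2$ were not $\G$-adjacent, their geometric projections' closures intersect in a set of dimension at most $d-2$, and since the projections of all descendants are nested inside them, no same-level pair of descendants can share a $(d-1)$-dimensional face either --- contradicting the existence of the non-tree edge $e=\{B_1,B_2\}$. (The paper actually phrases this as the projection closures ``do not intersect,'' a mild overstatement since two non-$\G$-adjacent same-level boxes can still share a corner; the argument goes through anyway because a low-dimensional intersection cannot grow into a $(d-1)$-dimensional face in descendants.) You instead characterize $\G$-adjacency combinatorially via index vectors and carry the ``differ by exactly $1$ in exactly one coordinate'' invariant up the two ancestor chains using the ceiling identity $\lceil(p+1)/2\rceil-\lceil p/2\rceil\in\{0,1\}$, noting that the collapse to zero can only happen at the LCA by definition of $r$. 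This is more work but more explicit, and it gives you the adjacency characterization as a reusable fact (the paper also uses it implicitly later). One inaccuracy in your writeup: the torus wrap-around neither occurs only at the coarsest levels nor ``forces $r=1$''; if $B_1,B_2$ sit at opposite ends of a coordinate, the pair of first-coordinate indices $(M_j,1)$ propagates as a wrap-around pair at every level until the row shrinks to a single box, so $r$ can be as large as $\tessPower{0}-\ell$. Your modular induction handles this correctly, but the parenthetical remark mischaracterizes the special case.
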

\begin{proof}
    The first part of the observation follows from the fact that the described union constitutes a cycle and by uniqueness of said cycle. For the second part, notice that for any two boxes in the same level of the tessellation with  $\{B'_1, B'_2\} \notin E(\G)$, we know that the projections of the boxes along the weight dimension do not intersect, even after taking their closures. Since all descendants of these boxes have projections that are subsets of the above, no pair of them intersects either, and thus the claim follows by contraposition.
\end{proof}

We are now in a position to define our generating set.

\begin{definition}
    Let $e = \{B_1, B_2\} \in E(\G)$ be an edge between two boxes of the same level. Let $B'_i = \parentbox{B_i}$ be the parent box of $B_i$. If $B'_1 = B'_2 = B'$, then let $\cycleofedge{e} = (B_1, B', B_2, B_1)$, otherwise let $\cycleofedge{e} = (B_1, B'_1, B'_2, B_2, B_1)$. We define $\generatingset{}$ as the set containing all such cycles.
\end{definition}

\begin{theoremEnd}[normal, text link=]{lemma}
    The set $\generatingset{}$ generates the cycle space of $\G$. Moreover, each element of $\generatingset{}$ is chordal in $\Gplus$.
\end{theoremEnd}
\begin{proofEnd}
    Recall that it suffices to show that $\generatingset{}$ generates all fundamental cycles of $T_{\G}$. Let $C$ be such a cycle where (see also~\Cref{fig:SpanningTreeFundCycle}) 
    \begin{displaymath}
        C = (B_{high}, B_1^L, B_2^L, \ldots, B_k^L, B_k^R, B_{k-1}^R, \ldots, B_1^R,B_{high})
    \end{displaymath}
    with $\{B_k^L, B_k^R\}$ being the added edge. Recall that by~\Cref{obs:fundamentalcycles} we have $\{B_i^L, B_i^R\} \in E(\G)$, meaning $\cycleofedge{\{B_i^L, B_i^R\}} \in \generatingset{}$ for all $i$. Consider $S = \sum_{i}\cycleofedge{\{B_i^L, B_i^R\}}$. Notice that all edges of $C$ appear exactly once in this sum. Additionally, all other edges ($\{B_j^L, B_j^R\}$ for $j < k$) in the sum appear exactly twice. Therefore, $S = C$, yielding the first claim.

    We will now show the chordality in $\Gplus$ of elements of $\generatingset{}$. Consider an element $\cycleofedge{e} \in \generatingset{}$. Because the edge $e$ is incident to boxes of the same level, we conclude that these boxes can be characterized by vectors $J_1$ and $J_2$ as in the definition of the tessellation, where $J_1$ and $J_2$ differ by 1 in exactly one of $d$ coordinates. Therefore, their parents also satisfy this property, or coincide. In either case, one can identify a $(d - 1)$-dimensional set of points in $d + 1$ dimensions, defined by restricting the weight to be the lower limit of the parents and the coordinate in which $J_1$ and $J_2$ differ to be equal to the value where the closures of the boxes of $e$ intersect. All other coordinates are free to take any value in the intervals prescribed by the boxes of $e$. This set of points guarantees that the considered cycle is chordal in $\Gplus$. 
\end{proofEnd}

With the condition on $\generatingset{}$ satisfied for $\G{}$ and $\Gplus{}$, let us present the theorem we want to use, which is an adapted version of Theorem 4 in~\cite{timar2011bondaryconnectivitygraphtheory}.
\begin{theorem}\label{theorem:boundary_connectivity}
Let $\Gplus{}$ be a connected graph, and $\G{}$ a connected subgraph of $\Gplus{}$. Suppose that there is a generating set $\generatingset{}$ for the cycle space of $\G{}$ that is chordal in $\Gplus{}$, and that for every edge $e\in \Gplus{}$ there is a cycle $O_e$ (containing $e$) in $\Gplus{}$ such that $O_e\setminus e\subset \G{}$, and $O_e$ is chordal in $\Gplus{}$. Let $C$ be a connected subgraph of $\Gplus{}$, and $x\in V(\G{})\setminus C$. Then $\visibleboundary{x}{C}$ is connected in $\G{}$.
\end{theorem}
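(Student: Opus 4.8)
The plan is to prove \Cref{theorem:boundary_connectivity} by contradiction, following and adapting the argument of Timár~\cite{timar2011bondaryconnectivitygraphtheory}. Assume $\visibleboundary{x}{C}$ is not connected in $\G$ (it is trivially connected if it has at most one vertex), and pick $p,q\in\visibleboundary{x}{C}$ lying in two different $\G$-components of $\visibleboundary{x}{C}$. Let $M$ be the $\G$-component of $x$ in $\G\setminus C$; by definition of the visible boundary $\visibleboundary{x}{C}\subseteq M$, so $p,q\in M$, while $M\cap C=\emptyset$. First I would construct a \emph{witness cycle} $Z$ in $\Gplus$. Since $C$ is $\Gplus$-connected and both $p$ and $q$ are $\Gplus$-adjacent to $C$, there is a simple $\Gplus$-path $\pi$ from $p$ to $q$ whose internal vertices all lie in $C$; choosing $\pi$ shortest among such paths that actually meet $C$, we may assume $V(\pi)\cap C\neq\emptyset$. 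Since $M$ is $\G$-connected, there is a simple $\G$-path $Q$ from $q$ to $p$ inside $M$; it therefore avoids $C$ and has length at least $2$, since a single $\G$-edge $pq$ would place $p$ and $q$ in the same $\G$-component of $\visibleboundary{x}{C}$. As $\pi$ meets $C$ only in its interior and $Q$ avoids $C$ entirely, $\pi$ and $Q$ are internally disjoint and share no edge, so $Z:=\pi\cup Q$ is a cycle in $\Gplus$.

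The next step is to decompose $Z$ into cycles that are chordal in $\Gplus$. Every edge of $Q$ lies in $\G$; for each edge $e$ of $\pi$ lying in $\Gplus$ but not in $\G$, the hypothesis of \Cref{theorem:boundary_connectivity} supplies a cycle $O_e$ in $\Gplus$ through $e$ with $O_e\setminus e\subseteq\G$ and $O_e$ chordal in $\Gplus$. Let $Z'$ be the symmetric difference (of edge sets) of $Z$ with all these $O_e$, taken over the edges $e$ of $\pi$ not in $\G$. Since each such $e$ is the unique $\Gplus\setminus\G$ edge of its $O_e$, all of these edges cancel, and $Z'$ is an even-degree subgraph all of whose edges lie in $\G$; hence $Z'$ lies in the cycle space of $\G$ and is a symmetric difference of members of $\generatingset$. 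Undoing the symmetric difference, we obtain $Z=W_1\mathbin{\triangle}\cdots\mathbin{\triangle}W_m$, where each $W_i$ is either a member of $\generatingset$ or one of the $O_e$, and in either case $V(W_i)$ induces a clique in $\Gplus$.

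The crux is to turn this chordal decomposition into a contradiction. I would regard $W_1\mathbin{\triangle}\cdots\mathbin{\triangle}W_m=Z$ as a filling of the cycle $Z$ by a $2$-complex whose $2$-cells are the $\Gplus$-cliques $V(W_i)$, and peel the $W_i$ off one at a time in an order compatible with this filling. This yields a sequence of \emph{clique moves} transforming the arc $\pi$ of $Z$ into the arc $Q$: each move replaces a contiguous subpath of the current path by another subpath lying inside a single clique $V(W_i)$, and each $\Gplus$-edge introduced this way is in turn replaceable by a $\G$-walk along the corresponding $O_e\setminus e$. At the start, the internal vertices of the path all lie in $C$; at the end (the path being $Q$) they all lie outside $C$. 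Examining the first move after which some internal vertex leaves $C$, the clique $V(W_i)$ used there contains both a vertex of $C$ and a vertex of $V(\G)\setminus C$, hence a vertex $\Gplus$-adjacent to $C$; tracking this through the whole sequence, and noting that every vertex visited stays inside $M$ and $\Gplus$-adjacent to $C$, one extracts a $\G$-walk from $p$ to $q$ contained in the set of $\Gplus$-neighbours of $C$ lying in $M$, which is exactly $\visibleboundary{x}{C}$ — contradicting the choice of $p$ and $q$.

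I expect this last step to be the main obstacle: the bookkeeping in the first two paragraphs is routine, but organizing the clique moves so that the deformed path stays confined to $\visibleboundary{x}{C}$ — that is, inside the component $M$ and inside the $\Gplus$-neighbourhood of $C$ — rather than merely inside $\Gplus$, is delicate. This confinement is precisely what the two hypotheses, chordality of $\generatingset$ in $\Gplus$ and the existence of the chordal cycles $O_e$, are tailored to provide, and it is the part of the argument we take over essentially from the proof of Theorem~4 in~\cite{timar2011bondaryconnectivitygraphtheory}.
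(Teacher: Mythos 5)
The paper does not actually prove this theorem: it is imported wholesale as ``an adapted version of Theorem~4 in~\cite{timar2011bondaryconnectivitygraphtheory}'', with no proof given (note that, unlike the surrounding lemmas, it is not wrapped in a \texttt{theoremEnd}/\texttt{proofEnd} pair). So there is no internal argument to compare against; what you are attempting is a reconstruction of Tim\'ar's original proof.

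Your first two paragraphs are correct and match Tim\'ar's setup: the cycle $Z=\pi\cup Q$ with $\pi$ routed through $C$ and $Q$ through the component $M$ of $x$ in $\G\setminus C$ is well-defined (the internal-disjointness argument is fine, since internal vertices of $\pi$ lie in $C$ and all of $Q$ avoids $C$), and the decomposition $Z=W_1\triangle\cdots\triangle W_m$ into $\Gplus$-chordal cycles, obtained by first cancelling the non-$\G$ edges of $\pi$ against their $O_e$ and then expanding the resulting element of the cycle space of $\G$ over $\generatingset$, is exactly right. The gap is your third paragraph, which is where all the content of the theorem lives and which you yourself flag as deferred. ``Peeling the $W_i$ off one at a time in an order compatible with this filling'' is not a defined operation: intermediate symmetric differences $W_1\triangle\cdots\triangle W_j$ need not be single cycles (they can be edge-disjoint unions of cycles), so there is no canonical sequence of ``clique moves'' deforming $\pi$ into $Q$, and the two confinement claims you need --- that every intermediate vertex stays in $M$ and stays $\Gplus$-adjacent to $C$ --- are asserted rather than derived. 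Tim\'ar's actual argument avoids this by a careful induction on $m$ (choosing a minimal counterexample and a specific generator sharing an edge with the current cycle to peel off, using chordality to show the replacement vertices remain $\Gplus$-adjacent to $C$, and $O_e\setminus e\subset\G$ to keep the walk in $\G$). Without carrying out that induction, your write-up is a plan plus a citation, not a proof; as a citation it is consistent with how the paper itself treats the statement, but as a standalone proof the crux is missing.
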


We notice the extra condition about the existence of the cycles $O_e$. Let us now record this condition for our particular pair $\G, \Gplus$, in a way which will be useful later on as well.

\begin{theoremEnd}[restate, text link=]{claim}\label{claim:shortcutGplus}
    Let $B_1, B_2$ be two boxes connected by an edge $e$ in $\Gplus{}$. Consider the graph $G_{local}$ with vertex set $V = \{B \mid \graphdist{\Gplus{}}{B}{B_1} \le 1\}$ and edge set $E(\G) \cap V^2 \setminus \{e\}$. Then, $G_{local}$ contains a path from $B_1$ to $B_2$. Additionally, this path along with $e$ constitutes a cycle $O_e$ as prescribed by~\Cref{theorem:boundary_connectivity}.
\end{theoremEnd}
    \begin{proofEnd}
        Let us first consider the case where $B_1$ and $B_2$ are boxes of the same level. In this case, these boxes correspond to vectors $J_1, J_2$ which differ in any coordinate by at most $1$. Changing the disagreeing coordinates one by one reveals the desired path in $G_{local}$.
        
        The only other case is when $B_1$ and $B_2$ are in subsequent levels. Now, assume wlog that $B_2$ is of the higher level, and consider its ``children'' boxes, which are in the same level as $B_1$. If $B_1$ is included in the children, the claim follows easily. Otherwise, it must hold that $B_1$ is at least $\Gplus{}$-adjacent to one of the children. This follows because $B_2$'s children cover its projection along the weight dimension, which $B_1$'s closure must intersect. Now we are again essentially in the first case, and we can change the disagreeing coordinates one by one until we reach a child of $B_2$. We then append the edge from said child to $B_2$. It is easy to see that we never leave $V$, since in the first same-level part no coordinate-wise difference is pushed beyond 1 and $B_2$ is in $V$ by assumption. For the chordality of the cycle, we can use a similar argument along with the fact that subsequent weight levels have intersecting weight interval closures.
    \end{proofEnd}

With~\Cref{theorem:boundary_connectivity} in our arsenal, we may proceed to relate $|\Wof{u}{v}|$ to the path length.

\begin{figure}[!ht]
\tdplotsetmaincoords{70}{-30}

    \centering
    \begin{minipage}[t]{0.49\textwidth}
        \centering
         \begin{tikzpicture}[tdplot_main_coords,
                        line join=round,line cap=round,scale=0.7]
    
    \newcommand{\drawcube}[5]{
      \filldraw[draw=black,line width=.3pt,fill=#5!40,opacity=.4]
        (#1,#2,#3) -- ++(#4,0,0) -- ++(0,#4,0) -- ++(-#4,0,0) -- cycle;
      \filldraw[draw=black,line width=.3pt,fill=#5!40,opacity=.4]
        (#1,#2,#3) -- ++(0,#4,0) -- ++(0,0,#4) -- ++(0,-#4,0) -- cycle;
      \filldraw[draw=black,line width=.3pt,fill=#5!40,opacity=.4]
        (#1,#2,#3) -- ++(#4,0,0) -- ++(0,0,#4) -- ++(-#4,0,0) -- cycle;
       \filldraw[draw=black,line width=.3pt,fill=#5!40,opacity=.4]
        (#1+#4,#2+#4,#3+#4) -- ++(-#4,0,0) -- ++(0,0,-#4) -- ++(#4,0,0) -- cycle;
        \filldraw[draw=black,line width=.3pt,fill=#5!40,opacity=.4]
        (#1+#4,#2+#4,#3+#4) -- ++(0,-#4,0) -- ++(0,0,-#4) -- ++(0,#4,0) -- cycle;
        \filldraw[draw=black,line width=.3pt,fill=#5!40,opacity=.4]
        (#1+#4,#2+#4,#3+#4) -- ++(-#4,0,0) -- ++(0,-#4,0) -- ++(#4,0,0) -- cycle;
    }
    
    \definecolor{lvl0}{RGB}{ 70,130,180}
    \definecolor{lvl1}{RGB}{ 34,139, 34}
    \definecolor{lvl2}{RGB}{178, 34, 34}
    
    \drawcube{0}{0}{3}{4}{lvl2}
    \drawcube{0}{0}{1}{2}{lvl1}
    \drawcube{2}{0}{1}{2}{lvl1}
    \foreach \x in {0,1,2,3}{\drawcube{\x}{0}{0}{1}{lvl0}}
    
    \coordinate (R) at (2.6,2,5);
    \coordinate (G1) at (1.2,1,1.8);
    \coordinate (G2) at (3,1,1.8);
    \coordinate (B1) at (0.5,0.5,0);
    \coordinate (B2) at (1.5,0.5,0);
    \coordinate (B3) at (2.5,0.5,0);
    \coordinate (B4) at (3.5,0.5,0);
    
    \draw[thick] (R)--(G1) (R)--(G2);
    \draw[thick] (G1)--(B1) (G1)--(B2);
    \draw[thick] (G2)--(B3) (G2)--(B4);
    
    
    \draw[ultra thick,blue,opacity=0.7] (B2)--(G1)--(R)--(G2)--(B3)--cycle;
    
    \draw[thick,green!70!black,dotted] (G1)--(R)--(G2)--cycle;
    
    \draw[thick,orange!90!black,dotted] (B2)--(G1)--(G2)--(B3)--cycle;
    
    \foreach \pt/\lbl in {R/$B_{high}$,G1/$B_1^L$,G2/$B_1^R$,B1/,B4/}{
      \filldraw[black] (\pt) circle (2.5pt) node[above right=0pt] {\lbl};
    }

    \foreach \pt/\lbl in {B2/$B_2^L$,B3/$B_2^R$}{
      \filldraw[black] (\pt) circle (2.5pt) node[below right=1pt] {\lbl};
    }

    \end{tikzpicture}
        \caption{Part of the spanning tree $T_{\G}$. The fundamental cycle (blue) formed by adding the edge $\{B_2^L, B_2^R\}$ is the symmetric difference of the (green) triangle $(B_1^L,B_{high},B_1^R)$ and the (orange) quadrilateral $(B_2^L,B_1^L,B_1^R,B_2^R)$, both in $\generatingset{}$.}
        \label{fig:SpanningTreeFundCycle}
    \end{minipage}
    \hfill
    \begin{minipage}[t]{0.49\textwidth}
        \centering
        \tdplotsetmaincoords{70}{120}

        \begin{tikzpicture}[tdplot_main_coords, line join=round, line cap=round, >=Latex, scale=0.8]
        
        \def\s{3} 
        
        \pgfmathsetmacro{\Ax}{-1}
        \pgfmathsetmacro{\Ay}{ 2.5}
        \pgfmathsetmacro{\Az}{-1.0}
        \pgfmathsetmacro{\Bx}{ 1.9}
        \pgfmathsetmacro{\By}{ -.8}
        \pgfmathsetmacro{\Bz}{ 4.0}
        
        \coordinate (A) at (\Ax,\Ay,\Az);
        \coordinate (B) at (\Bx,\By,\Bz);
        
        \pgfmathsetmacro{\tbot}{(0 - \Az)/(\Bz - \Az)}
        \pgfmathsetmacro{\xtbot}{\Ax + \tbot*(\Bx-\Ax)}
        \pgfmathsetmacro{\ytbot}{\Ay + \tbot*(\By-\Ay)}
        \coordinate (Ibot) at (\xtbot,\ytbot,0);
        
        \pgfmathsetmacro{\ttop}{(\s - \Az)/(\Bz - \Az)}
        \pgfmathsetmacro{\xttop}{\Ax + \ttop*(\Bx-\Ax)}
        \pgfmathsetmacro{\yttop}{\Ay + \ttop*(\By-\Ay)}
        \coordinate (Itop) at (\xttop,\yttop,\s);
        

        \draw[very thick,blue] (A) -- (Ibot);
        \draw[very thick,blue,dashed] (Ibot) -- (Itop);
        \draw[very thick,blue] (Itop) -- (B);

        \coordinate (Z) at (1.5, 0.4, 0);
        \draw[very thick, dotted, magenta] (Z) -- (B);
        \fill[magenta] (Z) circle (1.4pt) node[below left=1pt] {$z$};
        \fill[blue] (A) circle (1.4pt) node[above right=1pt] {$x$};
        \fill[blue] (B) circle (1.4pt) node[above right=1pt] {$y$};

        \draw[black, thick]
          (0,0,0) -- (\s,0,0) -- (\s,\s,0) -- (0,\s,0) -- cycle
          (0,0,\s) -- (\s,0,\s) -- (\s,\s,\s) -- (0,\s,\s) -- cycle
          (0,0,0) -- (0,0,\s)
          (\s,0,0) -- (\s,0,\s)
          (\s,\s,0) -- (\s,\s,\s)
          (0,\s,0) -- (0,\s,\s);

        \end{tikzpicture}
        \caption{An active box intersected by an edge $\{x, y\}$. The vertex $z$ inside the box is guaranteed to connect to one of the endpoints (by~\Cref{lem:boxcrossinglemma}), in this case $y$. Note that the ``vertical'' coordinate here signifies weight.} \label{fig:intersected_box}
    \end{minipage}
\end{figure}

\subsection*{Bounding the shortest path length by $|\Wof{u}{v}|$.}
As mentioned earlier, our approach will be to shortcut parts of (assumed) shortest paths which stray meaningfully out of $\Wof{u}{v}$. The following lemma is paramount for this goal. Intuitively, it allows us to ``anchor'' the path to the boundary of $\Wof{u}{v}$ whenever it is crossed. See~\Cref{fig:intersected_box} for a visualization.

\begin{theoremEnd}[restate, text link=]{lemma}\label{lem:boxcrossinglemma}
    Let $\Dzero$ in the definition of the tessellation be small enough. Let $x, y$ be two vertices connected by an edge intersecting an active box $B$. Then, $B$ contains a vertex $z$ that is connected to either $x$ or $y$ by an edge.
\end{theoremEnd}

\newcommand{\epsForWeightZoverX}{\ensuremath{\eps_1}}
\newcommand{\epsForWeightXoverZ}{\ensuremath{\eps_1'}}
\newcommand{\CForXYDistOverDi}{\ensuremath{c_1}}
\newcommand{\dmin}{\ensuremath{s_{min}}}
\newcommand{\DeltaWMax}{\ensuremath{\Delta_w}}
\newcommand{\epsForSlope}{\ensuremath{\eps_2}}
\newcommand{\CForXYDistOverWZsquared}{\ensuremath{c_2}}
\newcommand{\logC}{\ensuremath{\xi}}
\newcommand{\rhoX}{\ensuremath{\rho_x}}
\newcommand{\slopeVal}{\ensuremath{\sigma}}

\begin{proofEnd}
    In the following, we assume both $x$ and $y$ (as points in $\weightgeometry{}$) are outside of $B$, otherwise the claim easily follows.
    Let $z$ be one of the vertices $B$ is guaranteed to contain due to being active. Assume $B$ is a box of weight level $i$. Thus, $\weightof{z} \ge 2^{di/2}$ and let us assume equality wlog. Also assume wlog that $\weightof{y} \ge \weightof{x}$ and notice that this implies $\weightof{y} \ge \weightof{z}$ and $\weightof{x} \le 2^{d/2} \weightof{z}$ (otherwise the edge would not intersect the box).
        
     First of all, if $\geomdist{x}{y} \le \CForXYDistOverDi \Di{i}$ then $\volbetween{y}{z} \le (\geomdist{x}{y} + \Di{i})^d \le (1 + \CForXYDistOverDi)^d\Di{i}^d$. So, since $\weightof{y} \weightof{z} \ge 2^{di} = (1/\Dzero)^d \Di{i}^d$, $z$ would connect to $y$ if $1 + \CForXYDistOverDi \le 1/\Dzero$. So, assume from now on that $\geomdist{x}{y} \ge \Di{i}(1-\Dzero)/\Dzero$, which in turn implies that $\geomdist{y}{z}$ and $\geomdist{x}{z}$ are both at most $(1 + \Dzero/(1-\Dzero))\geomdist{x}{y}$.

    \paragraph*{Small $\weightof{x}$.} Consider for a moment the case $\weightof{z} \ge (1 + \epsForWeightZoverX) \weightof{x}$ for $\epsForWeightZoverX$ a function of $\Dzero$ to be specified. By the previous restriction on $\geomdist{y}{z}$, we see that $z$ would connect to $y$ if additionally $1 + \epsForWeightZoverX \ge (1 + \Dzero/(1-\Dzero))^d$.

    \paragraph*{Large $\weightof{x}$.} So, we can assume for the rest that $\weightof{z} \le (1 + \epsForWeightZoverX) \weightof{x}$, with $\epsForWeightZoverX = (1 + \Dzero/(1-\Dzero))^d - 1$. Notice that for $\Dzero$ going to $0$, this expression also goes to $0$. For convenience, let $\epsForWeightXoverZ$ be such that $\weightof{x} \ge (1 - \epsForWeightXoverZ) \weightof{z}$. Define also $\rhoX = \max\{1 + \epsForWeightXoverZ, \weightof{x}/\weightof{z}\}$ such that $(\rhoX - 2\epsForWeightXoverZ)\weightof{z} \le \weightof{x} \le \rhoX \weightof{z}$. Now, consider the line segment from $x$ to $z'$, the first point of intersection with $B$. Assuming that $x$ does not connect to $z$, the projection of this line segment along the weight dimension must cover distance at least $\dmin{}$ where $\dmin{}$ satisfies
    \begin{displaymath}
        (\Di{i} + \dmin{})^d \ge (\rhoX - 2\epsForWeightXoverZ) \weightof{z}^2 \implies \dmin{} \ge (\rhoX - 2\epsForWeightXoverZ)^{1/d}\weightof{z}^{2/d} - \Di{i}.
    \end{displaymath}
    This is because of the lower bound on $\weightof{x}$ and by triangle inequality. Now, on the weight dimension, again by the lower bound on $\weightof{x}$ and since the first point of contact with $B$ has an upper bound on its weight coordinate, we see that the segment moves upward in weight at most $\weightof{z}(2^{d/2} - \rhoX + 2\epsForWeightXoverZ)$. And so, the weight over $d$-dimensional distance slope of the line segment is at most 
    \begin{displaymath}
        \frac{\weightof{z}(2^{d/2} - \rhoX + 2\epsForWeightXoverZ)}{ (\rhoX - 2\epsForWeightXoverZ)^{1/d}\weightof{z}^{2/d} - \Di{i}} = \frac{\weightof{z}(2^{d/2} -\rhoX + 2\epsForWeightXoverZ)}{ ((\rhoX - 2\epsForWeightXoverZ)^{1/d} - \Dzero)\weightof{z}^{2/d}}.
    \end{displaymath}
    Notice that for $\Dzero$ going to $0$, this expression approaches $\weightof{z}^{1 - 2/d}(2^{d/2} - \rhoX) / (\rhoX)^{1/d}$. Therefore, let us define $\epsForSlope$ such that the slope is at most $(1 + \epsForSlope)\weightof{z}^{1 - 2/d}(2^{d/2} - \rhoX) / (\rhoX)^{1/d}$ and let $\slopeVal$ refer to this value. Now, this means that $\weightof{y} \le \weightof{x} + \slopeVal \geomdist{x}{y}$. Because $x$ and $y$ share an edge, we have (recalling that $\weightof{x} \le \rhoX \weightof{z}$)
    \begin{displaymath}
        \weightof{x}(\weightof{x} + \slopeVal \geomdist{x}{y}) \ge \volbetween{x}{y} \implies \rhoX \weightof{z} ( \rhoX \weightof{z}  + \slopeVal \geomdist{x}{y}) \ge \volbetween{x}{y}.
    \end{displaymath}
    Now, after substituting in $\slopeVal$ and writing $\geomdist{x}{y} = \CForXYDistOverWZsquared \weightof{z}^{2/d}$, the above can be rewritten as 
    \begin{displaymath}
        \weightof{z}^{2}(\CForXYDistOverWZsquared^d - (1 + \epsForSlope)\CForXYDistOverWZsquared \rhoX^{1-1/d}(2^{d/2} - \rhoX) - \rhoX^2) \le 0.
    \end{displaymath}
Notice for now that this imposes an upper bound on $\CForXYDistOverWZsquared$. We will work for the rest of the proof with this upper bound abstractly and only in the end show that it is tight enough for our purposes. For the following, keep $\weightof{x}$ as is but assume that we reduce $\weightof{y}$ to $\volbetween{x}{y}/\weightof{x}$. This modification preserves the edge from $x$ to $y$ and cannot introduce the edge from $y$ to $z$ if it was not there before, so we may carry it out wlog. Recall that we have assumed wlog that $\Di{i} \le \geomdist{x}{y} \Dzero/(1 - \Dzero)$. Observe that $\geomdist{x}{z} \le \geomdist{x}{z'} + \Di{i}$, $\geomdist{y}{z} \le \geomdist{y}{z'} + \Di{i}$ and that $\geomdist{x}{y} = \geomdist{x}{z'} + \geomdist{z'}{y}$. Because $\weightof{x} \le \rhoX\weightof{z}$, if we have 
\begin{displaymath}
\geomdist{y}{z'} \le (\rhoX^{-1/d} - \Dzero/(1 - \Dzero)) \geomdist{x}{y},   
\end{displaymath}
then $y$ would connect to $z$. So, we may assume that instead
\begin{displaymath}
    \volbetween{x}{z} \le (1 - \rhoX^{-1/d} + 2\Dzero/(1 - \Dzero))^d \volbetween{x}{y}.
\end{displaymath}
Now, since we know $\weightof{y} = \volbetween{x}{y}/\weightof{x}$, we conclude (using $\geomdist{x}{y} = \CForXYDistOverWZsquared{} \weightof{z}^{2/d} $) that $\weightof{y} \le \weightof{z} \CForXYDistOverWZsquared^d/(\rhoX - 2\epsForWeightXoverZ)$. Therefore, $x$ will connect to $z$, provided $\CForXYDistOverWZsquared$ is below and separated away from $\rhoX ^{2/d}/(\rhoX^{1/d} - 1)$.\footnote{Assuming $\Dzero$ sufficiently close to $0$. Indeed, let us pretend for a minute that $\Dzero$ is actually zero. Then, we would notice that $\weightof{x}\weightof{z} = \rhoX \weightof{z}^2$. Moreover, $\volbetween{x}{z} \le (1-\rhoX^{-1/d})^d \volbetween{x}{y} = (1-\rhoX^{-1/d})^d \CForXYDistOverWZsquared^d \weightof{z}^2$. If we plug in $\CForXYDistOverWZsquared = \rhoX ^{2/d}/(\rhoX^{1/d} - 1)$, we conclude that $\volbetween{x}{z} \le \rhoX \weightof{z}^2 \le \weightof{x} \weightof{z}$, and so $x$ connects to $z$.} We return to the inequality that provides this upper bound, which is essentially
\begin{displaymath}
    \CForXYDistOverWZsquared^d - \CForXYDistOverWZsquared \rhoX^{1-1/d}(2^{d/2} - \rhoX) - \rhoX^2 \le 0.
\end{displaymath}
The goal now will be to show that this inequality guarantees the desired bound on $\CForXYDistOverWZsquared{}$. We will do so by showing that i) $\CForXYDistOverWZsquared{} < 4$ and ii) $\rhoX ^{2/d}/(\rhoX^{1/d} - 1) > 4$. Let us start with the latter. Compare with the function $x^2/(x-1)$ on the interval $[1, \sqrt{2}]$, which is minimized at $x=\sqrt{2}$, attaining the value $2/(\sqrt{2}-1) > 4$. Now, for the first claim, assume for the sake of contradiction that $\CForXYDistOverWZsquared{} \ge 4$. We will show that the left hand side of the inequality in the most recent display would then be positive.

Since $\rhoX \in [1, 2^{d/2}]$, we may instead inspect the expression
\begin{equation}\label{eq:equation1}
      \CForXYDistOverWZsquared^d - \CForXYDistOverWZsquared \rhoX(2^{d/2} - \rhoX) - \rhoX^2.
\end{equation}
First, we will argue that viewed as a function of $\CForXYDistOverWZsquared{}$, the above expression is increasing for $\CForXYDistOverWZsquared{} \ge 4$. Indeed, the derivative is $d \CForXYDistOverWZsquared^{d-1} - \rhoX(2^{d/2} - \rhoX)$, which is minimized for $\rhoX = 2^{d/2}/2$. This last expression is increasing in $\CForXYDistOverWZsquared$, and setting it to $4$, we get $\frac{1}{4}(d 4^d - 2^d) > 0$. Now, it remains to show that~\eqref{eq:equation1} is positive for $\CForXYDistOverWZsquared = 4$. The resulting expression is a quadratic in $\rhoX$, namely $3\rhoX^2 - 2^{d/2 + 2}\rhoX + 4^d$, which is at least $4^d - (4/3)2^d > 0$.
        


\end{proofEnd}

\begin{figure}[t]
\centering
\includegraphics[scale=0.85]{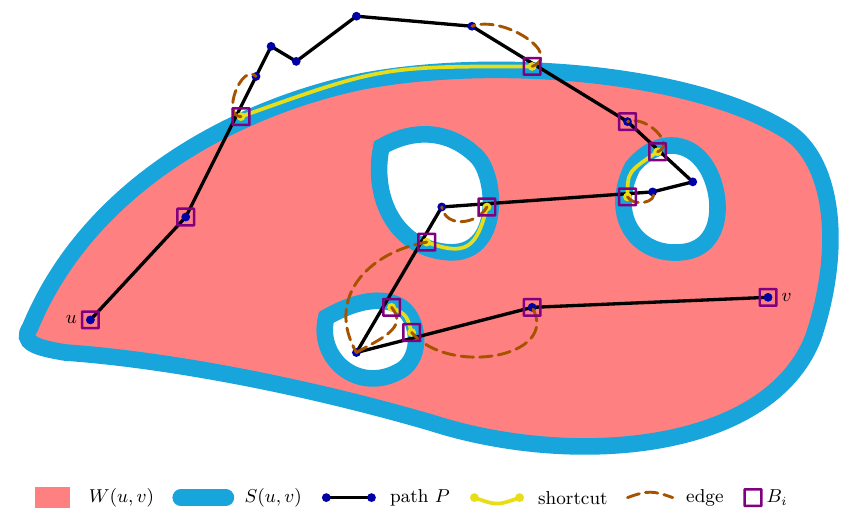}
\caption{Pictorial overview of the proof of~\Cref{lem:pathExistenceHighDensity}. The original path $P$ initially exits $\Wof{u}{v} \cup \Sof{u}{v}$ and reenters it without visiting another hole in the meantime. For this excursion,~\Cref{lem:boxcrossinglemma} provides two edges (dashed orange) we can use to ``anchor'' the path to the outer boundary. Afterwards, the path visits three consecutive holes without intersecting $\Wof{u}{v} \cup \Sof{u}{v}$. Bridges connecting the visible boundaries of these holes are found using~\Cref{claim:edgesBetweenHoles}. We finally identify a sequence of boxes $B_0, B_1, \ldots, B_k$ which are all in $\Wof{u}{v} \cup \Sof{u}{v}$ and contain a path from $u$ to $v$. In more detail, any two consecutive boxes contain vertices with graph distance at most $3$. For this, the worst case is in passing from the first hole to the second, where we start from the box on the first boundary, hop to one endpoint of the crossing edge, take the edge from one hole to the next and then finally hop from the other endpoint to the box on the second boundary. The boxes $B_0, B_1, \ldots, B_k$ are comprised of the violet ``checkpoint'' boxes and of the boxes one encounters when tracing the yellow shortcuts between them in $\Sof{u}{v}$.}
\label{fig:overall_proof}
\end{figure}

Let us now describe on a high level the rest of the proof. See also~\Cref{fig:overall_proof} for a visual aid to the following arguments. Consider the sequence of boxes induced by a shortest path between $u$ and $v$. That is, for each vertex in the path, write down the box in which it is contained. For later use, let us give a proper definition.
\begin{definition}[Box-footprint]
    Let $P = u_0, u_1, \ldots, u_k$ be a path in the GIRG graph from $u$ to $v$. We call the sequence $\boxof{u_0}, \boxof{u_1}, \ldots, \boxof{u_k}$ the \emph{box-footprint} of $P$. 
\end{definition}
This sequence need not be a walk in $\Gplus{}$. The crucial property is that a path from $u$ to $v$ is somehow contained in it. Our goal will be to transform this sequence in such a way that no boxes outside $\Wof{u}{v} \cup \Sof{u}{v}$ are contained in it, while preserving the path property. We have already discussed the general idea behind such modifications and depicted it in~\Cref{fig:2DBoundaries}. We will use~\Cref{lem:boxcrossinglemma} to establish ``anchors'' in the visible boundaries of $\Wof{u}{v}$ and use their connectedness to show the existence of paths between them. However, even the right panel of~\Cref{fig:2DBoundaries} still does not impart the full picture. There, we see two excursions being patched, one in the outside region and one inside the donut hole. But the path could also jump between holes before eventually reentering $\Wof{u}{v} \cup \Sof{u}{v}$ (as in~\Cref{fig:overall_proof}), in which case our argument temporarily breaks down. Fortunately, this then implies that some edge of the path must cross \emph{both} boundaries. Using~\Cref{lem:boxcrossinglemma} for the two boxes guaranteed to be crossed (each in one of the boundaries), we conclude that the two boundaries are actually still ``connected'' in the GIRG graph, in the sense that they contain vertices $u_1$ and  $u_2$ respectively which are separated by a constant number of edges in the GIRG graph. So, no matter how erratic an excursion might look, the set of visited ``holes'' is sufficiently well-knit, and therefore we can still find a sequence of boxes preserving the path. 
\jl{Hm, we don't need to necessarily change this before the deadline. But perhaps a clearer way to phrase it is: 1) Start with a u-v-path $P$. 2) Now consider the square $G^2$ of $G$, i.e., for every two vertices in distance $2$ we add an edge. Note that this cuts distances at most by a factor $2$. We will construct a short $u$-$v$ in $G^2$. 3) Consider any edge $e=\{x,y\}$ in $P$, where at least one endpoint $x$ is not in $W\cup S$. Then there is a vertex $z\in S$ that is adjacent to both $x$ and $y$ in $G^2$. We augment the path $P$ into a walk in $G^2$ by replacing the edge $e$ by the path $x - z - y$ of length $2$. 3) Now the walk in $G^2$ alternates between subpaths that are completely in $W\cup S$ and that are completely in the complement. The latter subpaths are preceded and followed by two vertices in $S$, and both those vertices are visible from the same box (we can take the box of any vertex on the subpath). 4) By 2.8 and 2.9, we may replace each subpath in the complement of $W\cup S$ by a path that stays within $S$. Hence, we have found a $u$-$v$-path in $G^2$ that stays within $W\cup S$. 6) We can shortcut the resulting path, since we never need more than two vertices in the same box. Assume there are three vertices $u_1,u_2,u_3$ in the same box on the path, then the edge $\{u_1,u_3\}$ exists, so we can shortcut the vertex $u_2$. Hence, there exists a $u$-$v$-path in $G^2$ of length at most $2|W\cup S|$. 7) Hence, there exsists a $u$-$v$-path in $G$ of length at most $4|W\cup S|$.}

Let us now initiate a formal treatment of the above paragraph. We start with a definition.

\begin{definition}[Hole]
    Consider two vertices $u, v$. A \emph{hole} is a connected component of $\G{} \setminus\Wof{u}{v}$. The \emph{visible boundary} of a hole $H$ is $\visibleboundary{x}{\Wof{u}{v}}$, for some $x$ in $H$. The precise choice of $x$ is irrelevant. Notice that all boxes in the visible boundary of $H$ are active by construction of $\Wof{u}{v}$.
\end{definition}

We now wish to establish the first ``anchor'' when the path from $u$ to $v$ initially exits $\Wof{u}{v} \cup \Sof{u}{v}$. To do so, we first need another definition and a claim that will be useful throughout the rest of the proof.

\begin{definition}[Box-shadow]
    Let $u, v$ be two vertices connected by an edge $e$. Let $S$ be the line segment in $\weightgeometry{}$ connecting the images of the vertices. We call the sequence of boxes intersected by $S$ (which is a walk in $\Gplus{}$) the \emph{box-shadow} of $\{u, v\}$ and denote it by $\boxshadow{e}$.
\end{definition}

\begin{theoremEnd}[normal, text link=]{claim}\label{claim:boundaryCrossings}
    Let $B_W$ be a box in $\Wof{u}{v} \cup \Sof{u}{v}$ and $B_H$ be a box in some hole $H$. Any walk in $\Gplus{}$ from $B_H$ to $B_W$ intersects the visible boundary of $H$.
\end{theoremEnd}
\begin{proofEnd}
    Consider the boxes $B_H = B_0, B_1, \ldots, B_k = B_W$ of such a walk. Because $B_W$ is in $\Wof{u}{v}$ or is a box that is $\Gplus{}$-adjacent to $\Wof{u}{v}$, there exists a minimum $i$ such that $\graphdist{\Gplus{}}{B_i}{\Wof{u}{v}} = 1$. We claim that $B_i$ is in the visible boundary of $H$. By construction, $B_i$ is $\Gplus$-adjacent to $\Wof{u}{v}$. It then suffices to show that $B_i$ is $\G$-connected to $B_H$, avoiding $\Wof{u}{v}$. Now, this follows because for every edge $\{B_j, B_{j + 1}\}$ of $\Gplus{}$ in the assumed walk with $j \le i - 1$, we know that $\graphdist{\Gplus{}}{B_j}{\Wof{u}{v}}$ is larger than 1. Therefore, $B_j$ and $B_{j + 1}$ are connected in $\G{} \setminus \Wof{u}{v}$. This follows from~\Cref{claim:shortcutGplus}. Indeed, the claim provides us with a path from $B_j$ to $B_{j+1}$ using only edges from $\G{}$ and where all the boxes satisfy have distance in $\Gplus{}$ at most $1$ from $B_{j}$. Therefore, they have distance at least $1$ from $\Wof{u}{v}$, i.e.\ they belong to the vertex set of $\G{} \setminus \Wof{u}{v}$.
\end{proofEnd}

Thus, to establish our first anchor, we identify the first box in the box-footprint that is not in $\Wof{u}{v} \cup \Sof{u}{v}$. It therefore belongs to some hole $H$. Now, the box just before is in $\Wof{u}{v} \cup \Sof{u}{v}$. Therefore, the box-shadow of the corresponding edge must intersect the visible boundary of $H$ by~\Cref{claim:boundaryCrossings}. The box found to intersect must be active, and therefore contains a vertex connected to some endpoint of said edge by~\Cref{lem:boxcrossinglemma}.

We have our first anchor, but we need to connect it all the way through to the next box of the box-footprint which will be in $\Wof{u}{v} \cup \Sof{u}{v}$ (such a box is guaranteed to exist because $v$ is in $\Wof{u}{v}$). To this end, we introduce the following claim.

\begin{theoremEnd}[restate, text link=]{claim}\label{claim:edgesBetweenHoles}
    Let $u_{H_1}, u_{H_2}$ be two vertices connected by an edge $e$ and suppose that the boxes containing them belong to different holes $H_1, H_2$. Then, there exists a vertex $v_1$ in some box $B_1$ in the visible boundary of $H_1$ and a vertex $v_2$ in some box $B_2$ in the visible boundary of $H_2$ with $\graphdist{GIRG}{v_1}{v_2} \le 3$. 
\end{theoremEnd}
\begin{proofEnd}
    It suffices to show that $\boxshadow{e} = \boxof{u_{H_1}}, \ldots, \boxof{u_{H_2}}$ intersects both visible boundaries of $H_1$ and $H_2$, since then these boxes where the intersections happen are active and thus they contain vertices which are connected through the edge $e$. We argue that there must exist some box $B_l$ in this box-shadow which has $\graphdist{\Gplus{}}{B_l}{\Wof{u}{v}} = 1$ and we choose the first such box. Then, this box is in the visible boundary of $H_1$, by the same argument as in~\Cref{claim:boundaryCrossings}. Similarly we can show intersection with the visible boundary of $H_2$.

    To see that indeed $B_l$ must exist, assume for the sake of contradiction that all boxes in $\boxshadow{e}$ have distance to $\Wof{u}{v}$ larger than 1. Then, by~\Cref{claim:shortcutGplus}, $\boxof{u_{H_1}}$ would be connected to $\boxof{u_{H_2}}$ in $\G{} \setminus \Wof{u}{v}$, which is a contradiction, as they belong to different holes.
\end{proofEnd}

We now have all the tools necessary to prove the main lemma of this subsection.

\begin{theoremEnd}[normal, text link=]{lemma}\label{lem:pathExistenceHighDensity}
    Suppose $u$ and $v$ are connected in the GIRG graph. There exists a sequence of boxes $B_0, B_1, \ldots, B_k$, entirely contained in $\Wof{u}{v} \cup \Sof{u}{v}$, with the following property. For every $i$, there is a vertex $u_i$ in $B_i$ with $u_0 = u$ and $u_k = v$ such that $\graphdist{GIRG}{u_j}{u_{j+1}} \le 3$ holds for all $j \in {0, \ldots k-1}$. In particular, $\graphdist{GIRG}{u}{v} \le \Oh*{|\Wof{u}{v}|}$. 
\end{theoremEnd}

\begin{proofEnd}
    Let $P = u'_0, u'_1, \ldots, u'_{k'}$ be the path assumed to exist between $u$ and $v$, and let $B'_0, B'_1, \ldots, B'_{k'}$ be its box-footprint. Let $i < j - 1$ be such that $B'_i$ and $B'_j$ are in $\Wof{u}{v} \cup \Sof{u}{v}$, but all $B'_l$ for $i< l < j$ are not. Let $H_{init}$ be the hole which contains $B'_{i + 1}$ and let $H_{fin}$ be the one which contains $B'_{j - 1}$ (with the possibility that $H_{init} = H_{fin}$). We will show three things that let us replace any such subsequence $B'_{i + 1}, \ldots, B'_{j - 1}$.
    \begin{enumerate}
        \item The visible boundary of $H_{init}$ contains a box $B_{init}$ with a vertex $u_{init}$ which satisfies $\graphdist{GIRG}{u_{init}}{u_i'} \le 3$, where $u_i'$ is the vertex which generated $B'_i$.
        \item A completely analogous statement holds for $H_{fin}$ and $B'_{j}$.
        \item There exists a sequence $B_{init}, \ldots, B_{fin}$ satisfying the properties listed in the statement of the lemma (treating $u_{init}$ and $u_{fin}$ as the start and end vertices).
    \end{enumerate}

    For the first statement, we use~\Cref{claim:boundaryCrossings}, providing the box-shadow of the edge $\{u'_i, u'_{i + 1}\}$ as the walk to be intersected. The bound on $\graphdist{GIRG}{u_{init}}{u_i'}$ follows because $u_{init}$ is the vertex in the active box found in the intersection and therefore it connects to either $u'_i$ or $u'_{i+1}$ by~\Cref{lem:boxcrossinglemma} (actually the bound here is 2 rather than 3). The second statement follows completely analogously.

    For the third statement, we employ~\Cref{claim:edgesBetweenHoles}. Indeed, in the subsequence $B'_{i + 1}, \ldots, B'_{j - 1}$, one may visit a sequence of holes $H_1, H_2, \ldots, H_h$. \Cref{claim:edgesBetweenHoles} then guarantees that for any two consecutive holes, their visible boundaries contain representative boxes that are (in 3 steps) connected through GIRG vertices. Therefore, a suitable sequence of boxes from $B_{init}$ to $B_{fin}$ exists, since the visible boundaries are themselves connected, comprised solely of active boxes and adjacent boxes have vertices that induce cliques in the GIRG graph.
\end{proofEnd}

\subsection*{Bounding $|\Wof{u}{v}|$.}
The final step in bounding the diameter is controlling the size of the inactive region itself. We will do this with a simple path-counting argument. For some $\Wof{u}{v}$ to grow large, there must occur large connected sets of generally inactive boxes. However, because the degree of the box-adjacency graph is bounded, there are exponentially many (in $k$) such possible sets of size $k$ and the probability of any fixed one being mostly inactive decays exponentially in $k$. Moreover, the strength of the decay depends on $\density$, whereas the scale of the combinatorial explosion does not. This Peierls-type idea is essentially the same as in~\cite[Lemma 13]{muller2019diameter}.

\begin{theoremEnd}[restate, text link=]{lemma}\label{lem:pathcountinglemma}
    Assume $\tau \le 3$ and that $\density$ is large enough. Then, with high probability we have  for some constant $C$ that $|\Wof{\boxof{1}}{\boxof{2}}| \le C\log{n}$ for all pairs $\boxof{1}, \boxof{2}$ of boxes.
\end{theoremEnd}

\begin{proofEnd}
    Fix a single pair $\boxof{1},\boxof{2}$. Suppose that for these boxes, $|\Wof{\boxof{1}}{\boxof{2}}| > C\log{n}$. For large enough $C$, we can conclude that at least half the boxes in $\Wof{u}{v}$ must be inactive (some are included by virtue of being part of $\Lof{u}{v}$ and these need not be inactive).

    In~\cite[pp.~129--130]{bollobas2006art}, it is shown that in a graph with $n$ vertices and maximum degree $\Delta$, there are at most $n(e(\Delta-1))^k$ connected subsets of size $k$. We have $\Delta = 3^{d}-1$. Then, there are (by a gross overestimation) at most $2^{k}$ ways to choose a subset that should be fully inactive. Each such subset (assumed to be at least half of the whole set) is fully inactive with probability at most $p_{in}^{k/2}$, where $p_{in}$ is an upper bound on the probability of a single box not containing a vertex. Since $\tau \le 3$, this probability is maximized at the lowest-level boxes. By increasing $\density$, we can assume $p_{in}$ is as low as desired. In particular, for some large enough $\density$, we have that the probability that there exists a half-inactive connected subset of $k$ boxes is at most $ne^{-k}$, which is $\oh{1}$ for large enough $C$.
\end{proofEnd}

\section{Upper bound: \texorpdfstring{$\tau < 3$}{τ < 3} and arbitrary \texorpdfstring{$\density$}{λ}.}\label{sec:any_density}
In this section we will forego the assumption that $\density$ is large enough and make do with $\tau < 3$.

Thus, we can no longer assume that each box is active with probability arbitrarily close to 1. We can now only do that for boxes above a certain level.\footnote{This is because the number of vertices in a box at level $i$ is Poisson distributed with mean $\Th*{2^{id(1 - (\tau - 1)/2)}}$, which rapidly increases with growing $i$.} We now have to somehow deal with the boxes of lower levels. To manage this, we group these boxes into what we will call ``towers''.\footnote{This idea is similar in spirit to the renormalization carried out in~\cite{muller2019diameter}, but requires a different approach. There, the renormalization is essentially reliant on the fact that the percolation threshold for 1D site percolation is $1$, which is no longer true in higher dimensions.} A tower simply consists of the union of a box and all boxes ``below'' it.
\begin{definition}[Towers]
    Given a level $i$, we define the towers of our tessellation as:
    \begin{displaymath}
        \towers{} \;=\; 
\left\{
    \left(\prod_{k=1}^d 
  \left[
    2^i(j_k - 1)\Dzero,\; 2^i j_k \Dzero
  \right)\right) \times \left[1, (2^{d/2})^{i + 1}\right)
  \;\Big|\;
  1 \le j_k \le 2^{\tessPower{0} - i}
\right\}.
    \end{displaymath}
\end{definition}

Notice that the sole difference between $\tessLevel{i}$ and $\towers{}$ is the lower end of the weight range.

We now define what it means for a tower to be active. 

\begin{definition}[Active tower]\label{def:activetower}
    Fix some $\eps > 0$. Let $\tower{0} \in \towers{}$ with $\twmax{}$ the upper end of the weight range of $\tower{0}$. Ensure that $\twmax{}^{\eps}$ is an integer power of $(\sqrt{2})^d$ (for technicalities below) by potentially decreasing $\eps$. Let $\tower{1}, \tower{2}, \dots, \tower{3^d - 1}$ be the towers neighboring $\tower{}$, $\fort{} = \bigcup_{i = 1}^{3^d-1}\tower{i}$ and let $G_{\fort{}}$ be the graph induced by vertices in $\fort{}$. We say that $\tower{0}$ is \emph{active}, if:
    \begin{enumerate}
        \item Any box (as in~\Cref{def:canonboxpath}) that is part of $\fort{}$ and whose lower weight range is at least $\twmax{}^{\eps}$ is active.\footnote{Note that this implies the next property.}
        \item In $G_{\fort{}}$, all vertices of weight at least $\twmax{}^{\eps}$ are in the same connected component $\comp$.
        \item In $G_{\fort{}}$, for any component $\comp' \neq \comp$, if $u, v \in \comp'$, then $\geomdist{u}{v} \le \twmax{}^{\Th*{\eps}}$.
    \end{enumerate}
\end{definition}

For the rest of this section, we redefine $\Lof{u}{v}, \Wof{u}{v}, \Sof{u}{v}$ in the following way. We simply replace the notion of boxes by elements which can be either towers or boxes of level above the tower cutoff. This still defines essentially the same graphs $\G{}, \Gplus{}$, up to a reduction in the number of levels. The definition of $\Lof{u}{v}$ is still based on the spanning tree of $\G{}$, but we consider as start/end vertices in said graph either the tower containing $u$/$v$ or the boxes which contain them if their weights are large enough. The definitions of $\Wof{u}{v}, \Sof{u}{v}$ simply inherit the effects of the change in $\Lof{u}{v}$. The goal will now be to re-establish the ``shortcuts'' through $\Sof{u}{v}$ and again show that $\Wof{u}{v}$ does not grow too large.

To that end, we first show that the higher the tower, the more likely it is to be active.

\begin{theoremEnd}[normal, text link=]{lemma}
Let $\tower{0} \in \towers{}$ be a tower of level $i$. Then, 
$$\Prob*{\text{$\tower{0}$ is active}} \ge 1 - \oh*{1},$$
where the $\oh{\cdot{}}$ is with respect to $i$.
\end{theoremEnd}

\begin{proofEnd}
It suffices to show that properties $1$ and $3$ fail to be satisfied each with $\oh{1}$ probability. For the first one, notice that the probability for any of the boxes of interest to be inactive is maximized for the lowest-level ones. Therefore, a fixed box is inactive with probability at most $\Exp*{-\twmax{}^{\Th*{\eps{}}}}$. It is not hard to see that there are polynomially many (in $\twmax{}$) boxes.\footnote{Because each box has $2^d$ ``children'' below it and we reach the lowest level boxes within $\Oh*{\log{\twmax{}}}$ iterations.} Thus, a union bound shows the claim.

Now, for property $3$, we will expose $G_{\fort{}}$ in two steps. First, reveal all vertices of weight at most $\twmax{}^{\eps}$ and the edges between. We then reveal the higher-weight vertices and all remaining edges. Up to a $\oh{1}$ failure probability (from property $1$), in the second step we only merge components from the first step to $\comp$. Therefore, it suffices to show that any component $\comp'$ from the first step violating property $2$ will merge with $\comp$ in the second step.

Indeed, let $\comp'$ be such a component and let $u, v$ be the vertices satisfying $\geomdist{u}{v} \ge \twmax{}^{C \eps}$ for a constant $C$ to be determined. Consider a shortest path between $u$ and $v$. This path must contain at least $\twmax{}^{C\eps - 2\eps}$ edges, since each edge can only cover a geometric distance at most $\twmax{}^{2\eps}$. Consider every other vertex (i.e.\ the first, the third, the fifth and so on) in this path and place a ball of radius $1/2$ around each one. If any pair of these balls would intersect, we could shortcut the path, which would contradict its minimum length. Therefore, we conclude that a subset of the vertices in the path cover a geometric area of volume $\Om*{\twmax{}^{C\eps - 2\eps}}$. Now, let $A$ be that area. If in the second step we reveal any vertex in $A$, the component $\comp'$ will merge with $\comp$ (as we will have an edge from some vertex of the path to one vertex in $\comp$). Now, the number of vertices of weight at least $\twmax{}^{\eps}$ in $A$ is Poisson distributed with expectation $\Om*{\twmax{}^{C\eps - 2\eps -(\tau - 1)\eps}}$. It follows that for large enough $C$ (in particular as long as $C > \tau + 1$), the probability that none are revealed is at most $\Exp*{-\twmax{}^{\Th*{\eps{}}}}$. This argument was for a singular $\comp'$. However, there can be at most polynomially many (in $\twmax{}$) candidates, so again a union bounds suffices. This bound on the number of components is because the total volume of $\fort{}$ is polynomial in $\twmax{}$, and each $\comp'$ would need to disjointly occupy at least some constant volume (much like in the shortest path argument). 
\end{proofEnd}

As a first observation towards re-establishing the $\Sof{u}{v}$ ``shortcuts'', let us remark that a version of~\Cref{lem:boxcrossinglemma} still holds when we replace some box levels by towers. In more detail, if it so happens that an edge from $x$ to $y$ intersects an active \emph{box} of not too low weight (i.e.\ just satisfying the \emph{weight} condition of property $1$ in~\Cref{def:activetower}), one of these vertices must connect to a vertex in the intersected box, since it is active. This follows verbatim by~\Cref{lem:boxcrossinglemma}.

\begin{definition}[High-weight box]
Fix a tower level $i$ and an $\eps > 0$ as in~\Cref{def:activetower}. Then, let $\twmax{}$ be as in~\Cref{def:activetower}. We say that a box of the tessellation is \emph{high-weight} if the lower limit of its weight range is at least $\twmax{}^{\eps}$.  
\end{definition}


With those boxes out of the way, the only other option left for the $u-v$ path to exit $\Wof{u}{v} \cup \Sof{u}{v}$ is through the low-weight boxes of the towers. We will now show that this is also impossible without connecting to the largest component $\comp$ of some $G_{\fort{}}$. We first deal with the case where an edge possibly starts inside $\Wof{u}{v}$ and ends up somewhere outside of $\Wof{u}{v} \cup \Sof{u}{v}$. We show that this just cannot happen without intersecting a high-weight box of $\Sof{u}{v}$.

\begin{theoremEnd}[restate, text link=]{claim}\label{claim:highWeightBoxCrossing}
    There is no edge $e$ from a vertex $x$ to $y$ with $x$ in $\Wof{u}{v}$ and $y$ not in $\Wof{u}{v} \cup \Sof{u}{v}$ such that this edge does not intersect any high-weight box of $\Sof{u}{v}$. Moreover, the intersected box is guaranteed to be part of the visible boundary of the hole in which $\boxof{y}$ belongs.
\end{theoremEnd}

\begin{proofEnd}
    Assume without loss of generality that $\weightof{x} \le \weightof{y}$. Let $\twmax{}$ be as in~\Cref{def:activetower}. Consider the box-shadow $\boxshadow{e} = B_1, B_2, \ldots, B_k$. This is a $\Gplus{}$-walk from $\boxof{x}$ to $\boxof{y}$. Let $i < j - 1$ be a maximal pair such that $B_i \in \Wof{u}{v}$, $B_j \notin \Wof{u}{v} \cup \Sof{u}{v}$ and $B_l \in \Sof{u}{v}$ for all $i < l < j$. We must have $\graphdist{\Gplus{}}{B_i}{B_j} > 1$, and so any two vertices in these boxes must be separated in $d$-dimensions by at least a distance $\Om{\twmax{}^{2/d}}$. The part of the line segment travelling between two such vertices and through the boxes $B_l$ stays below weight $\twmax{}^{\eps}$ until it reaches $B_j$, otherwise it would intersect an active box. Therefore, this implies that the weight/distance slope of the edge's line segment is at most $\rho = C'(\twmax{}^{\eps} - \weightof{x})/\twmax{}^{2/d}$. Now, we may write $\weightof{y} = \weightof{x} + \rho \geomdist{x}{y}$. We know that $\volbetween{x}{y} \le \weightof{x} \weightof{y} = \weightof{x}(\weightof{x} + \rho \geomdist{x}{y})$. Since $\weightof{x} \rho < 1$ for small enough $\eps$ and/or large enough $\twmax{}$ and we also have $d \ge 1$, the validity of this inequality is monotone wrt $\geomdist{x}{y}$, in the sense that larger values for the latter can only falsify the inequality. We know that $\geomdist{x}{y}$ is $\Om*{\twmax{}^{2/d}}$, and so we must have for some constant $C > 0$ (depending on $\Dzero$ and importantly not on $\twmax{}$):
    \begin{displaymath}
        C^{2/d}\twmax{}^{2} \le \weightof{x}(\weightof{x} + C \twmax{}^{2/d} (\twmax{}^{\eps} - \weightof{x})/\twmax{}^{2/d}) \le \weightof{x}(\weightof{x}(1-C) + C\twmax{}^{\eps})
    \end{displaymath}
    Now, since $\weightof{x} \le \twmax{}^{\eps}$, we have a contradiction for $\eps < 1$ and  $\twmax{}$ large enough.
\end{proofEnd}

The final piece of the puzzle is to handle cases where $x$ is in $\Wof{u}{v}$ and $y$ is in a low-weight box of $\Sof{u}{v}$. We now need to switch viewpoint and instead of actually looking at an edge between $x$ and $y$ we identify $u_{in}$ as the last vertex to be in $\Wof{u}{v}$ and $u_{out}$ to be the first vertex after $u_{in}$ (on the path from $u$ to $v$) which is not in $\Sof{u}{v}$ (similar to the definition of $B_i$ and $B_j$ above). Between $u_{in}$ and $u_{out}$, all vertices are in $\Sof{u}{v}$. Such an arrangement must occur if the path from $u$ to $v$ is to ever exit $\Wof{u}{v} \cup \Sof{u}{v}$ (and in the case we are currently considering, at least one vertex between $u_{in}$ and $u_{out}$ must exist).

\begin{theoremEnd}[restate, text link=]{claim}
    It is impossible for a path to exist between a vertex $u_{in} \in \Wof{u}{v}$ to a vertex $u_{out} \notin \Wof{u}{v} \cup \Sof{u}{v}$ without some vertex in said path connecting (within its own tower) to a vertex in a high-weight box of $\Sof{u}{v}$. Moreover, the mentioned box (more accurately, the tower containing it) is guaranteed to be part of the visible boundary of the hole in which $\boxof{u_{out}}$ belongs.
\end{theoremEnd}

\begin{proofEnd}
    Let $u_1, u_2, \dots, u_k$ be the vertices between $u_{in}$ and $u_{out}$ in the path, all of which are wlog assumed to be in $\Sof{u}{v}$. We know that $\geomdist{u_{in}}{u_{out}} \ge C \twmax{}^{2/d}$, where $\twmax{}$ is as in~\Cref{def:activetower}, by similar arguments as in the proof of~\Cref{claim:highWeightBoxCrossing}. Suppose first that $\geomdist{u_{in}}{u_1} \ge \frac{1}{3} C \twmax{}^{2/d}$. In this case, notice that there must exist a vertex $z$ in a high-weight box within distance $C \twmax{}^{2/d}$ from $u_1$ with $\weightof{z} \ge 4^d \weightof{u_1}$, because $u_1$ is in a low-weight box of an active tower $\tower{a}$. By triangle inequality, $\volbetween{u_{in}}{z} \le 4^d \volbetween{u_{in}}{u_1}$. Therefore, $z$ must connect to $u_{in}$. So, for the rest of the proof we may assume $\geomdist{u_{in}}{u_1} \le \frac{1}{3} C \twmax{}^{2/d}$. 

    Let $G_{\fort{}}$ be as in~\Cref{def:activetower}, with $\tower{0}$ there replaced by $\tower{a}$. Assume wlog that $u_1$ is not in the same connected component as $z$ (the vertex in the top box of $\tower{a}$) in $G_{\fort{}}$. Moreover, let $v_{hop}$ be the first vertex among $u_{2}, \dots u_{k}, u_{out}$ which is not a vertex of $G_{\fort{}}$. One such vertex must exist, since otherwise we would conclude that $\geomdist{u_{in}}{u_{out}} \le \frac{1}{3} C \twmax{}^{2/d} + \twmax{}^{\Th*{\eps}} \le \frac{2}{3}C \twmax{}^{2/d}$. Let $v_{pre}$ be the vertex right before $v_{hop}$. Then, this means that either i) the \emph{geometric} position of $v_{hop}$ is outside of $\tower{a}$ and the neighboring towers (i.e.\ outside of $\fort{}$ as in~\Cref{def:activetower}) or ii) $\weightof{v_{hop}} \ge W$ which again implies i), if we (wlog) assume that $z$ does not connect to $v_{hop}$. In any case, we conclude that $\geomdist{v_{pre}}{v_{hop}} \ge \frac{1}{3} C \twmax{}^{2/d}$. By a similar argument as in the first paragraph, this implies that $z$ connects to $v_{hop}$, finishing the proof.
\end{proofEnd}

The above claims collectively prove the following, analogous to~\Cref{lem:pathExistenceHighDensity}.

\begin{corollary}\label{cor:UB1}
    Suppose $u$ and $v$ are connected in the GIRG graph. There exists a sequence of boxes/towers $X_0, X_1, \ldots, X_k$, entirely contained in $\Wof{u}{v} \cup \Sof{u}{v}$, with the following property. There exists a constant $C$ such that for every $i$, there is a vertex $u_i$ in $X_i$ with $u_0 = u$ and $u_k = v$ such that $\graphdist{GIRG}{u_j}{u_{j+1}} \le C$ holds for all $j \in {0, \ldots k-1}$. In particular, $\graphdist{GIRG}{u}{v} \le \Oh*{|\Wof{u}{v}|}$. 
\end{corollary}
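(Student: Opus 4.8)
The plan is to follow the proof of~\Cref{lem:pathExistenceHighDensity} essentially verbatim, replacing each appeal to~\Cref{lem:boxcrossinglemma} and~\Cref{claim:edgesBetweenHoles} by whichever of the statements above is robust to the presence of towers. Concretely, fix a GIRG path $P=u'_0,\dots,u'_{k'}$ from $u$ to $v$ and its box/tower-footprint $X'_0,\dots,X'_{k'}$, and decompose $P$ into maximal \emph{excursions}, i.e.\ maximal stretches $X'_{i+1},\dots,X'_{j-1}$ avoiding $\Wof{u}{v}\cup\Sof{u}{v}$ with $X'_i,X'_j\in\Wof{u}{v}\cup\Sof{u}{v}$. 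Since $u,v\in\Wof{u}{v}$, it suffices to replace each excursion by a sequence of towers/high-weight boxes contained in $\Wof{u}{v}\cup\Sof{u}{v}$, each carrying a vertex, with consecutive vertices at GIRG-distance $\Oh{1}$; shortcutting the concatenation (no element is used more than a bounded number of times, exactly as in~\Cref{lem:pathExistenceHighDensity}) then gives $\graphdist{GIRG}{u}{v}=\Oh*{|\Wof{u}{v}|}$.

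For a fixed excursion I would trace $P$ backwards to the last vertex $u_{in}$ lying in $\Wof{u}{v}$ (which exists since $u\in\Wof{u}{v}$) and forwards to the first vertex $u_{out}\notin\Wof{u}{v}\cup\Sof{u}{v}$ after it; between them $P$ stays in $\Sof{u}{v}$, and $\boxof{u_{out}}$ lies in some hole $H$. If $u_{out}$ immediately follows $u_{in}$, apply~\Cref{claim:highWeightBoxCrossing} to the edge $\{u_{in},u_{out}\}$; otherwise apply the last claim above. Either way one obtains a high-weight box $B^\ast$ of $\Sof{u}{v}$ in the visible boundary of $H$; being high-weight it is active (property~1 of~\Cref{def:activetower}) and hence contains a vertex, which by the tower-version of~\Cref{lem:boxcrossinglemma} recorded in the text sits at GIRG-distance $\Oh{1}$ from a vertex of $P$ that is itself in $\Wof{u}{v}\cup\Sof{u}{v}$. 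This is the \emph{entry anchor}; a symmetric argument on the tail of the excursion yields an \emph{exit anchor} in the visible boundary of the hole $H'$ containing $X'_{j-1}$. During the excursion $P$ visits a sequence of holes $H=H_1,\dots,H_h=H'$, and whenever one edge of $P$ has its endpoints in two distinct holes, an argument parallel to~\Cref{claim:edgesBetweenHoles} — now using the tower-version of~\Cref{lem:boxcrossinglemma} together with~\Cref{claim:shortcutGplus} — produces a high-weight box in the visible boundary of each of those two holes carrying vertices at GIRG-distance $\Oh{1}$.

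It then remains to move \emph{within} each visible boundary. The hypotheses of~\Cref{theorem:boundary_connectivity} for the pair $\G,\Gplus$ were established earlier, so each visible boundary is connected in $\G$; moreover it is made up exclusively of active towers and active high-weight boxes, and any two $\Gplus$-adjacent such elements carry vertices at GIRG-distance $\Oh{1}$: for two high-weight boxes this is the box-adjacency property from~\Cref{sec:high_density}, and for a tower it follows from properties~1 and~2 of~\Cref{def:activetower}, which realise such a connection through the common component $\comp$ of the relevant $G_{\fort{}}$. Concatenating the entry anchor, the inter-hole bridges, the visible-boundary walks and the exit anchor, and then shortcutting, yields the desired $X_0,\dots,X_k$ for this excursion; splicing over all excursions finishes the proof.

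The main obstacle is the coarsening to towers. For an ordinary box, activity already certifies a vertex inside it, and the box-adjacency property certifies an edge between the vertices of any two adjacent boxes; for a tower, activity (\Cref{def:activetower}) only controls the \emph{geometric} diameter of the components other than $\comp$ (property~3), and that bound is polynomially large in $\twmax{}$ rather than constant. Hence an anchor cannot be produced until one has argued that the excursion has already travelled far enough \emph{inside} some tower to be pushed into $\comp$ — precisely the geometric content of~\Cref{claim:highWeightBoxCrossing} and the last claim above. The delicate part is to compose these statements with no gap at the ``seams'' where $P$ passes from one tower into a neighbouring tower or into a high-weight box (and where, unlike in~\Cref{claim:edgesBetweenHoles}, the box witnessing a boundary crossing need not itself be high-weight), all while keeping each hop between consecutive $X_i$ of constant GIRG-length — which is why the constant $C$ here is somewhat larger than the $3$ of~\Cref{lem:pathExistenceHighDensity}.
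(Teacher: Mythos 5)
Your proposal is essentially the paper's approach: the paper offers no explicit proof of~\Cref{cor:UB1}, merely asserting that \Cref{claim:highWeightBoxCrossing}, the claim following it, and the template of~\Cref{lem:pathExistenceHighDensity} ``collectively prove'' the statement, and your reconstruction — decompose into excursions, use the two tower-aware crossing claims to plant high-weight anchors in the visible boundaries, walk along boundaries using~\Cref{theorem:boundary_connectivity}, and shortcut — is exactly the argument the authors are gesturing at. Two small remarks. First, several quantities you flag as ``polynomially large in $\twmax{}$'' (the geometric-diameter bound in property~3 of~\Cref{def:activetower}, the number of high-weight boxes in a tower, and hence the graph distance within an active tower from an arbitrary anchor to the top box) are in fact \emph{constants}: the tower level $i$ is fixed before $n\to\infty$ (``we now fix a large enough $i$''), so $\twmax=(2^{d/2})^{i+1}=\Oh{1}$ and the constant $C$ in the corollary absorbs all such tower-internal overhead; your worry here is a non-issue, not a delicacy to be managed. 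Second, you correctly notice (and the paper quietly glosses over) that there is no stated tower-analogue of~\Cref{claim:edgesBetweenHoles} for an edge whose two endpoints lie in two distinct holes while \emph{both} lie outside $\Wof{u}{v}$: the two claims provided only cover the case where one side is anchored in $\Wof{u}{v}$, and their proofs lean on $\weightof{x}\le\twmax^{\eps}$ for the $\Wof{u}{v}$-side endpoint, which does not automatically transfer. Your ``argument parallel to~\Cref{claim:edgesBetweenHoles}'' is asserted but not supplied; this matches the level of rigor the paper itself offers at that seam, so it is not a gap relative to the paper, but it is a real detail that a fully self-contained write-up would need to spell out (most plausibly by rerunning the slope/distance estimate of~\Cref{claim:highWeightBoxCrossing} with the bound $\graphdist{\Gplus}{B_i}{B_j}>1$ now obtained because both endpoints sit in different holes rather than because one sits in $\Wof{u}{v}$).
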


\subsection*{Final steps in bounding the diameter.} Just like we did for the case $\tau \le 3$ with high enough $\density$, one can show that the inactive regions, where we now fix a large enough $i$ and a small enough $\eps$ and consider towers instead of boxes for the low levels, do not grow too large. A crucial difference is that the events $E_1$ and $E_2$ meaning that neighboring towers $\tower{1}$ and $\tower{2}$ are active are \emph{not independent}. However, in any set of $k$ towers/boxes, we can always identify a constant fraction of them that \emph{are} completely independent, and that fraction is roughly $1/(3^{d} - 1)$. This only forces us to drive the probability $p_{in}$ in~\Cref{lem:pathcountinglemma} even closer to $0$. This argument proves the following, analogous to~\Cref{lem:pathcountinglemma}.

\begin{corollary}\label{cor:UB2}
    Assume $\tau < 3$. Then, with high probability we have for some constant $C$ that $|\Wof{X_1}{X_2}| \le C\log{n}$ for all pairs $X_1, X_2$ of boxes/towers.
\end{corollary}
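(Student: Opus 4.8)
The plan is to rerun the Peierls-type path count of \Cref{lem:pathcountinglemma} with the new building blocks---towers at and below a fixed cutoff level $i$, ordinary boxes above it---the only genuinely new ingredient being that neighbouring towers no longer have independent activity events. First I would fix, once and for all, a cutoff level $i$ and a parameter $\eps>0$ as in \Cref{def:activetower}; both are pinned down only at the very end. As recorded in \Cref{def:inactiveregion}, $\Wof{X_1}{X_2}$ is a $\Gplus{}$-connected set, everything in $\Wof{X_1}{X_2}\setminus\Lprimeof{X_1}{X_2}$ is inactive, and $|\Lprimeof{X_1}{X_2}| = \Oh{\log n}$ deterministically; so if $|\Wof{X_1}{X_2}| > C\log n$ for some pair and $C$ is a large enough constant, then at least half the elements of $\Wof{X_1}{X_2}$ are inactive. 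Hence it is enough to prove that, whp, there is \emph{no} $\Gplus{}$-connected set of boxes/towers of size $k \ge C\log n$ at least half of whose elements are inactive---an event that does not mention $X_1, X_2$, so a single union bound covers all pairs at once.

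For the count, the $\Gplus{}$-adjacency graph on boxes/towers still has bounded maximum degree $\Delta = \Oh{3^d}$ and $N = \Oh{n}$ vertices, so by \cite[pp.~129--130]{bollobas2006art} there are at most $N(e(\Delta-1))^k$ connected sets of size $k$, and at most $2^k$ ways to mark which $\ge k/2$ of its members should be the inactive ones. The new work is to bound the probability that a fixed marked collection of $m\ge k/2$ elements is entirely inactive. Here I would exploit that, by \Cref{def:activetower}, the activity of a level-$i$ tower $\tower{0}$ is a function only of the Poisson process inside its fort $\fort{}$, and that the forts of two level-$i$ towers whose grid coordinates differ by at least $3$ in some coordinate are disjoint; moreover a box above the cutoff is active iff it is non-empty, so its activity event is independent of those of the other above-cutoff boxes and of every tower (its weight range is disjoint from every fort). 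Colouring the level-$i$ towers by the residues of their grid coordinates modulo $3$ therefore splits them into $3^d$ classes with mutually independent within-class activity events; since the marked above-cutoff boxes are moreover independent of everything, pigeonhole extracts from any $m$ marked elements a mutually independent sub-collection of size $m'\ge m/3^d$.

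To finish, let $p_{in}$ be a uniform upper bound on the inactivity probability of any single box/tower. Since higher towers---and higher boxes---are more likely to be active, $p_{in}$ is attained at the cutoff level, and the earlier lemma asserting $\Prob{\tower{0}\text{ is active}}\ge 1-\oh{1}$ (with the error term $\oh{1}$ taken with respect to the tower level, and it is precisely here that $\tau<3$ is used) shows that $p_{in}$ can be made as small as we wish by taking $i$ large. A fixed independent sub-collection of $m'\ge k/(2\cdot 3^d)$ elements is entirely inactive with probability at most $p_{in}^{k/(2\cdot 3^d)}$, so the three estimates multiply to bound the probability that a half-inactive $\Gplus{}$-connected set of size $k$ exists by
\begin{displaymath}
 N\cdot(e(\Delta-1))^k\cdot 2^k\cdot p_{in}^{k/(2\cdot 3^d)} \;=\; N\bigl(2e(\Delta-1)\,p_{in}^{1/(2\cdot 3^d)}\bigr)^{k}.
\end{displaymath}
Choosing $i$, and hence $p_{in}$, small enough that $2e(\Delta-1)\,p_{in}^{1/(2\cdot 3^d)}\le e^{-1}$ makes this at most $Ne^{-k}$; summing over $k\ge C\log n$ then gives $\oh{1}$ once $C$ is a large enough constant, which is the claim.

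The step I expect to require the most care is the independence bookkeeping of the second paragraph: making precise which collections of towers have \emph{jointly} independent activity---it is the disjointness of the forts, not of the towers themselves, that matters---and checking that the modular colouring really extracts a \emph{constant} fraction of any marked set. Everything else is a faithful adaptation of the proof of \Cref{lem:pathcountinglemma}, with ``$\density$ large'' replaced by ``cutoff level $i$ large''.
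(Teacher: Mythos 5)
Your argument is correct and follows exactly the same strategy the paper sketches for this corollary: rerun the Peierls-type count from \Cref{lem:pathcountinglemma} over the renormalized ground graph of towers and above-cutoff boxes, observe that neighbouring towers are no longer independent, and rescue the argument by extracting from any marked collection a constant fraction with mutually independent activity events, taking the cutoff level large (rather than $\density$ large) to push $p_{in}$ as small as needed. Your modular-$3$ colouring, together with the observation that above-cutoff boxes are independent of everything because their weight ranges avoid all forts, makes the paper's informal ``constant fraction roughly $1/(3^d-1)$'' claim precise (you get $1/3^d$, which is a slightly smaller but still perfectly adequate constant), and the rest is a faithful transcription of the original lemma's counting.
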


\section{Lower bound.}\label{sec:LB}
In this section we prove a matching logarithmic lower bound for the diameter when $\tau > 2$. The idea will be to identify $n^{\Om{1}}$ regions in the ground space and argue that each of them has an independent chance of producing a path component of logarithmic size. The success probability per trial will be $n^{-C'}$ with a constant $C'$ of our choice, and we show that at least one will succeed whp.

In more detail but still at an intuitive level, a central point is that the maximum weight found in the graph will whp be at most roughly $n^{1/(\tau - 1)}$, which is $o(n)$ for $\tau > 2$. The paths we will try to construct will consist of vertices with constant weight. Therefore, a single trial requires only about $n^{1/(\tau-1)}$ volume of space. Inside this region, we can locally decide whether there exists any unwanted vertex connecting to our under-construction path. Outside the region, no vertex can connect to the path under the aforementioned maximum weight constraint. 

Let us also provide some more details about how to decide existence and local disconnectivity of the path to the rest of the region. First of all, we will look for the path at the middle of the region. Geometrically, it will be a ``snaky'' path confined in a ball of radius $\Theta(\log^{1/d}{n})$ but still occupying a constant fraction of its volume (see~\Cref{fig:hilbert}). Assuming such a path exists, we now try to ensure that no other vertices connect to it. Fix an $r$ and consider the geometric topos with minimum distance to any of the vertices of the path in the interval $[r, 2r]$. Since the maximum distance between any two vertices in the path is $C \log^{1/d}{n}$, it has volume at most $(r + C \log^{1/d}{n})^d$. Now, for a vertex in this region to connect to any vertex in the path, its weight must be at least $\Om{r^d}$. Going over $r$ in powers of two, it can be seen that the expected number of such vertices is at most $C' \log{n}$, for a constant $C'$ that can be controlled by reducing $C$. Since this number is Poisson-distributed, the probability that it is equal to $0$ is at least $\Exp{-C' \log{n}} = n^{-C'}$. Assuming we can attempt such a trial $n^{C''}$ times, we simply need to ensure $C' < C''$. For context, $C''$ will roughly be $1 - 1/(\tau - 1)$, i.e.\ the number of disjoint volume $n^{1/(\tau - 1)}$ cubes we can find.

Now, let us describe a curve through the cubes of a uniform tessellation of a $d$-dimensional cube, which will serve as a ``skeleton'' for our path.

\begin{definition}\label{def:grayCurve}
    Let $R$ be a cube-shaped region in $d$ dimensions and tessellate it into $M^d$ identical cubes. Each such cube can be represented as a vector with $d$ entries from $\{0, 1, \ldots, M - 1\}$, encoding the geometric position of its center. Order these vectors according to the M-ary Gray code, i.e.\ every two consecutive vectors differ only in a single entry and by exactly $1$. We define the \emph{Gray curve} of level $M$ of $R$ as the piecewise linear curve going through the centers of the cubes in the order given by the M-ary Gray code.
\end{definition}

\begin{theoremEnd}[restate, text link=]{observation}\label{obs:awaycubes}
    For any two non-consecutive cubes visited by the Gray curve, the following holds. If $x_1, x_2$ are points on the curve in these cubes respectively, then $\geomdist{x_1}{x_2} \ge c_d s$, where $s$ is the side length of the cubes and $c_d$ is a constant depending on the dimension $d$. 
\end{theoremEnd}

\begin{proofEnd}
    Notice that each cube contains inside it one or two parts of the curve, both of which are line segments from the center of the cube to the center of some face of the cube, with length $s/2$. It suffices to show that such pairs of line segments do not come within distance smaller than $c_d s$. 

    If the centers of the cubes are separated by distance larger than $s$, the claim follows by triangle inequality (the set of possible distances is quantized). Therefore, for the rest of the proof we assume that the centers are exactly $s$ units away from each other, i.e.\ the cubes share a face. 

    Notice that there is a limited set of directions a line segment can have (of cardinality $2d$), seen as oriented away from the center of the cube. If the segments are both oriented straight towards the other cube's center, the cubes would be consecutive. Thus, they are not and the claim follows by the law of cosines.
\end{proofEnd}

We proceed with a claim that gives a lower bound on the probability of a cube of volume $n^{1/(\tau - 1) + \eps}$ to be locally good, using the object of~\Cref{def:grayCurve}.

\begin{figure}[ht]
  \centering
  \includegraphics[width=0.4\linewidth]{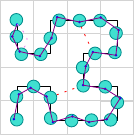} 
  \caption{Logarithmic diameter lower bound construction for $d=2$. Inside a uniform tessellation of a box $R'$ of logarithmic volume, we consider the Gray curve (\Cref{def:grayCurve}) going through the centers of the boxes. Tracing along the curve, we place balls of radius 1 where we look for vertices of restricted weights. By definition, vertices in boxes not consecutive in the Gray curve cannot connect (depicted in dashed red). Other pairs of nearby balls may introduce small shortcuts (dashed pink).}\label{fig:hilbert}
\end{figure}

\newcommand{\centerDist}{\ensuremath{s_{cube}}}
\begin{theoremEnd}[restate, text link=]{claim}\label{claim:snakePathEvent}
    Let $C_1, \eps > 0$. Consider a cube-shaped region $R$ of volume $n^{1/(\tau - 1) + \eps}$ in the geometric space of the GIRG graph $G$ (with $V(R)$ the set of vertices in $R$) and let $E$ be the event that $G[V(R)]$ contains a component of diameter $C_1 \log{n}$ comprised solely of vertices of weight at most $3^d$. Then $\Prob{E} \ge \Exp{-C_2 \log{n}}$, for a $C_2$ that depends on $C_1$.
\end{theoremEnd}
\begin{proofEnd}
    Wlog translate $R$ such that its center is the origin. Let $R'$ be a smaller similar region of volume $C_1' \log{n}$ (with $C_1'$ sufficiently large compared to $C_1$), also centered at the origin. Tessellate $R'$ into identical cubes in the finest way (i.e.\ largest $M$ as in~\Cref{def:grayCurve}) possible such that the following holds.
    For any two non-consecutive cubes visited by the curve and any two corresponding points on the curve (within the mentioned cubes respectively), said points have distance at least $12$ from each other. Notice that by~\Cref{obs:awaycubes}, $M$ is at least $\Om{\log^{1/d}{n}}$, i.e.\ the cubes shrink down to constant volume.
    
    Now, consider balls of volume 1 (i.e.\ also having radius 1) placed on the Gray curve whenever the distance to the previous ball's center becomes $2$ (we start by placing one ball at the start of the curve). Note that each cube is assigned at least one ball, by~\Cref{obs:awaycubes} (or more simply by demanding that the side length of the tessellation is at least $4$). With at least the required probability, each such ball contains \emph{exactly} one vertex with weight in $[2^d, 3^d]$. Now, notice that the vertices of consecutive balls connect, since their distance is at most $4$ and the product of their weights is at least $4^d$. Moreover, there is no edge between vertices in balls inside non-consecutive cubes (in the Gray curve ordering), since the distance between them would be at least $12 - 1 - 1 = 10$, while the product of their weights is at most $9^d$. These arguments imply the existence of a \emph{shortest} path $P$ with length at least the number of cubes of the tessellation, which is in turn in the order of the volume of $R'$, i.e.\ logarithmic.

    It now remains to show that with good enough probability, this path is not shortcut when exposing the rest of the graph inside $R$. To this end, fix some $r = 2^l$ with $l$ a non-negative integer. Consider the geometric topos of points with minimum distance to a vertex of $P$ in the interval $[r, 2r]$. Since all vertices of $P$ are in $R'$, the volume of this topos is $\Oh{\max\{r^d, \log{n}\}}$. Now, to connect to a vertex of $P$, a vertex in said topos must have weight $\Om{r^d}$. The number of vertices in this topos with weight at least this lower bound is Poisson distributed with expectation $\Oh{ r^{d(1- \tau)}(\max\{r^d, \log{n}\})}$. Summing over all $l$, we observe that the overall expected number of vertices connecting to $P$ is $\Oh{\log{n}}$ and is also Poisson-distributed. Thus, the probability that it is $0$ is at least as desired, as the constant in Landau notation depends on $C_1$.
\end{proofEnd}

With the above claim at hand, we may now proceed to prove this section's theorem.

\begin{theoremEnd}[normal, text link=]{theorem}\label{theorem:LB}
    Assume $\tau > 2$. Then, whp the diameter of GIRG is $\Om{\log{n}}$.
\end{theoremEnd}
\begin{proofEnd}
    Fix a small enough $\eps > 0$ and observe that whp the maximum weight of the graph will be at most $n^{1/(\tau - 1) + \eps}$. Identify $\Th*{n^{1 - 1/(\tau - 1) - 2\eps}}$ disjoint cubes of volume $n^{1/(\tau - 1) + 2\eps}$.\footnote{The factor $2$ is introduced to avoid dealing with some constants in arguing that no connections exist from $R'$ to outside of $R$ from~\Cref{claim:snakePathEvent}.} Now, if the event of~\Cref{claim:snakePathEvent} succeeds for any one of these cubes, we have a witness to the claimed lower bound on the diameter, since the discovered component has large diameter within its own cube and does not connect to any vertex outside the cube, by virtue of the upper bound we have on the maximum weight and a lower bound on the distance between a vertex in said component to any vertex outside the cube (since the component is localized in $R'$ from the proof of~\Cref{claim:snakePathEvent}). By setting $C_2$ in~\Cref{claim:snakePathEvent} small enough compared to $1 - 1/(\tau - 1) - 2\eps > 0$, we conclude that the number of successful cubes is binomially distributed with expectation $n^{\Om{1}}$. Therefore, whp we have not only one but polynomially many components of logarithmic diameter in the graph.
\end{proofEnd}


\section{Conclusion}\label{sec:Conclusion}
We have shown that the diameter of T-GIRG is $\Theta(\log n)$. A natural next question is whether the same holds for GIRG with positive temperature. The additional edges in this case are also called~\emph{weak ties}, and they have fundamental impact on many processes on GIRG, see for example~\cite{komjathy2023four,kaufmann2024rumour}. Our proof method breaks down in that case, mainly since Lemma~\ref{lem:boxcrossinglemma} is no longer true: a weak edge may cross an active box although both endpoints are too far away to reliably connect to any vertices in the box. Nevertheless, we conjecture that the diameter is still $\Theta(\log n)$ in this case, and a proof of this conjecture would be highly interesting.

\bibliographystyle{plainurl}
\bibliography{bibliography}

\newpage

\appendix
\section{Upper bound: non-deterministic edges.}\label{sec:puzzle}
So far, we have assumed that an edge \emph{always} exists between vertices $x$ and $y$, provided $\weightof{x}\weightof{y} \ge \volbetween{x}{y}$. However, a very natural modification to the model is to only include such edges with some probability $\connprob < 1$. With the current definition of active boxes/towers, variants of~\Cref{lem:boxcrossinglemma} break down, since the required edge between $z$ (the vertex inside the intersected box as in~\Cref{fig:intersected_box}) and one of $x$ or $y$ may fail to exist.

The solution is to redefine what it means for a box to be active. To keep things simple, let us assume for the exposition of this generalization attempt that $\density$ is large enough, so we will not need to deal with towers. However, we should note that it is possible to do so at a cost of some additional technical overhead.

Since we want the desired edges to be present in the graph, we can (modulo some technical details) simply redefine an active box as one which satisfies the following. For \emph{every edge} $e = \{x, y\}$ intersecting the box, there exists a vertex $z_e$ in the box which satisfies $\min\{\graphdist{GIRG}{z_e}{x}, \graphdist{GIRG}{z_e}{y}\} \le C$ for some constant $C$. To maintain good connectivity within the box, we also require that the box itself induces a connected graph of constant diameter. The latter property is local to the box, but the former can in principle depend on the entirety of $\weightgeometry{}$ (due to the possible existence of high weight vertices far from the box). This renders futile an attempt to argue as in~\Cref{lem:pathcountinglemma} with the goal of bounding $\Wof{u}{v}$. 

However, independence is only used in~\Cref{lem:pathcountinglemma} as a means to conclude that $|\Wof{u}{v}|$ is at most logarithmic. We now present a different approach to proving the latter, which does not rely on independence, at least not in the same way. Keep in mind that we are simply reducing the task of proving the required size bound to proving a particular bound on the probability of an event that should, at least intuitively, be extremely unlikely. It still remains open to prove this bound.

Let us now formally redefine what it means for a box to be active in this new context, which is slightly different from the intuitive explanation above.

\begin{definition}[Active box]\label{def:activeBoxV2}
    Let $B$ be a box of the tessellation of $\weightgeometry{}$. Given the full realization of the GIRG graph $G_{GIRG}$, we say that $B$ is \emph{active}, if:
    \begin{enumerate}
        \item $B$ contains at least half of the expected number of vertices. 
        \item $G_{GIRG}[V_{local}]$ is connected and has diameter at most $C'$, where $V_{local}$ is the set of vertices within boxes at $\Gplus{}$-distance at most $C$ from $B$.
        \item For any edge from $x$ to $y$ in $G_{GIRG}$ that intersects $B$ with both $x$ and $y$ not in $V_{local}$, there exists some vertex $z$ in $B$ with $\min\{\graphdist{GIRG}{z}{x}, \graphdist{GIRG}{z}{y}\} \le C$.
    \end{enumerate}
\end{definition}
Notice that this definition suffices to recover the statement $\graphdist{GIRG}{u}{v} \le \Oh{|\Wof{u}{v}|}$. Also, note that the first two items of the definition induce only local dependencies between boxes. We therefore now examine the third item. Recall~\Cref{lem:boxcrossinglemma} and let $S(B)$ refer to the set of vertices which should connect to the arbitrary vertex $z$ assumed to exist in $B$. That is, for each edge $\{x, y\}$ that intersects the box at a point $z'$, one can identify from the proof a $x' \in \{x, y\}$ which satisfies $(\geomdist{x'}{z'} + \Di{i})^d \le \weightof{x'} \weightof{z}$, i.e.\ $x'$ has a chance to connect to $z$, regardless of where $z$ lies in $B$. We collect all such $x'$ in $S(B)$.

Now, let us partition $S(B)$ into $S_{high}(B)$ and $S_{low}(B)$ where an $x'$ is in $S_{high}(B)$ if and only if $\weightof{x'} \ge 2^d \weightof{z}$. We notice that any $x' \in S_{low}(B)$ satisfies $\graphdist{\Gplus{}}{B}{\boxof{x'}} \le C$ for large enough $C$.\footnote{First, notice that $\weightof{x'} \weightof{z} \ge \Di{i}^d$, meaning $\weightof{x'} \ge \Om{\weightof{z}}$. Therefore, the level of $\boxof{x'}$ is at most a constant below that of $B$ (and at most 2 above). By assumption we also know $\volbetween{x'}{z} \le \weightof{x'} \weightof{z} \le \Oh{\Di{i}^d}$. There are only a constant number of boxes $x'$ could belong to which satisfy both these constraints.} Therefore, such $x'$ are ``taken care of'' locally. The following claim provides a powerful handle on elements of $S_{high}(B)$.

\begin{claim}
    Let $x_1, x_2$ be elements of $S_{high}(B)$. Then, $\weightof{x_1} \weightof{x_2} \ge \volbetween{x_1}{x_2}$.
\end{claim}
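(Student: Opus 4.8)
The plan is a short triangle-inequality argument that converts the two facts ``$x_1$ (almost) connects to the distinguished vertex $z\in B$'' and ``$x_2$ (almost) connects to $z$'' into ``$x_1$ and $x_2$ satisfy the threshold condition''. Let $B$ have weight level $i$, so its geometric footprint is a cube of side $\Di{i}$ in the max norm, let $z$ be the distinguished vertex of $B$ from the proof of~\Cref{lem:boxcrossinglemma}, and abbreviate $w:=\weightof{z}$. By the construction of $S(B)$ recalled just before the claim, each $x_j$ ($j\in\{1,2\}$) arises from an edge crossing $B$ at some point $z'_j$, and satisfies $\bigl(\geomdist{x_j}{z'_j}+\Di{i}\bigr)^d \le \weightof{x_j}\, w$.

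First I would observe that $z'_j$ and $z$ both lie in $B$, so $\geomdist{z'_j}{z}\le \Di{i}$, and hence $\geomdist{x_j}{z} \le \geomdist{x_j}{z'_j}+\Di{i} \le (\weightof{x_j}\, w)^{1/d}$ for $j\in\{1,2\}$. Routing from $x_1$ to $x_2$ through $z$ and raising to the $d$-th power then gives $\volbetween{x_1}{x_2} \le w\bigl(\weightof{x_1}^{1/d}+\weightof{x_2}^{1/d}\bigr)^d$; bounding the smaller of the two weights by the larger (using $(a^{1/d}+b^{1/d})^d\le 2^d\max\{a,b\}$) yields $\volbetween{x_1}{x_2} \le 2^d\, w\, \max\{\weightof{x_1},\weightof{x_2}\}$.

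The final step is to cash in the membership of $x_1$ and $x_2$ in $S_{high}(B)$, which says exactly $\min\{\weightof{x_1},\weightof{x_2}\} \ge 2^d w$; combining this with the previous bound gives $\volbetween{x_1}{x_2} \le \min\{\weightof{x_1},\weightof{x_2}\}\cdot\max\{\weightof{x_1},\weightof{x_2}\} = \weightof{x_1}\weightof{x_2}$, as claimed. There is no genuine obstacle here; the only points worth watching are that the ``$+\Di{i}$'' already built into the $S(B)$-condition absorbs the hop from $z'_j$ to $z$ inside the box, and that the constant $2^d$ appearing in the definition of $S_{high}(B)$ is precisely what is needed to offset the $2^d$ blow-up produced by the triangle inequalities.
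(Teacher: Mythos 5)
Your proof is correct and follows essentially the same route as the paper's: both arguments route through the distinguished vertex $z$ via the triangle inequality and cash in the factor $2^d$ built into the definition of $S_{high}(B)$. The paper is marginally shorter by choosing wlog the $x_j$ farther from $z$ and bounding $\volbetween{x_1}{x_2}\le 2^d\volbetween{x_1}{z}\le 2^d\weightof{x_1}\weightof{z}\le\weightof{x_1}\weightof{x_2}$, whereas you symmetrize and then split into $\min$ and $\max$ of the weights, but the underlying mechanism is identical.
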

\begin{proof}
    Notice that $\weightof{x_1}\weightof{x_2} \ge 2^d \weightof{x_1}\weightof{z}$. Assume wlog that $\geomdist{x_1}{z} \ge \geomdist{x_2}{z}$, which implies $\volbetween{x_1}{x_2} \le 2^d \volbetween{x_1}{z} \le 2^d \weightof{x_1}\weightof{z} \le \weightof{x_1} \weightof{x_2}$.
\end{proof}

The above claim shows that while faraway vertices that ``demand'' to be connected for the box to be active may exist, they must have a chance to connect in between themselves, allowing for new ways a constant-length path to the box might occur. This motivates the definition of the following ``puzzle''.

\begin{puzzle}
    Let $G$ be a graph, whose vertex set is comprised of disjoint sets $L$ and $R$. Each vertex of $L$ corresponds to a box $B$. Each vertex of $R$ corresponds to a vertex in some $S_{high}(B)$. An edge between $B \in L$ and $x \in R$ exists if $x \in S_{high}(B)$ (called a \emph{cross-edge}). There are no edges between vertices in $L$. An edge between two vertices in $R$ (called an \emph{$R$-edge}) exists if and only if their neighborhoods in $L$ overlap. 

    Keep each $R$-edge with probability $\connprob$ and each cross-edge with probability $p'$. The value of $p'$ is the probability that a vertex $x \in S_{high}(B)$ connects to \emph{any} vertex in $B$. Let $G'$ be the resulting graph. We define an event $E$ as follows:
    \begin{displaymath}
        E = \{\text{There exists some $B \in L$ which satisfies $\graphdist{G'}{B}{x} \le C$ for all $x \in S_{high}(B)$}\}.
    \end{displaymath}

    The goal of the puzzle is to show that $\Prob*{\neg E} \le \Exp{-f(p')|L|}$, for some growing $f(p')$ whose limit is infinity when $p'$ goes to $1$.
\end{puzzle}

Let us explain how this would show a logarithmic upper bound on $|\Wof{u}{v}|$. Recall that in the proof of~\Cref{lem:pathcountinglemma} we show that for $|\Wof{u}{v}|$ to grow large, there must exist a subset of it that is at least half the total size and also fully inactive. With good enough probability, at least a constant fraction of this set satisfies at least the first two items in~\Cref{def:activeBoxV2}. We therefore need to show that with good enough probability, at least one of these boxes will also satisfy the third item. This is precisely where the above puzzle comes into play. If one manages to prove this bound, then we just need to set $p'$ large enough to ``kill'' the combinatorial explosion of the different choices for $\Wof{u}{v}$ and its half-full inactive set, which grows as $\Exp{\Oh{|L|}}$. To drive $p'$ larger, we simply need to assume $\density$ is large enough so that each box has a sufficient number of vertices by the first item of~\Cref{def:activeBoxV2}.
\section{Deferred Proofs}
\printProofs{}

\end{document}